\DeclareMathOperator{\Div}{\mathrm{div}}
\DeclareMathOperator{\Kappa}{\mathcal{K}}
\let\div\relax
\DeclareMathOperator{\div}{div}
\DeclareMathOperator{\rot}{rot}
\DeclareMathOperator{\curl}{\mathbf{curl}}
\def\0{\phantom{0}}
\newtheorem{theorem}{Theorem}[section]
\newtheorem{lemma}[theorem]{Lemma}
\theoremstyle{definition}
\theoremstyle{remark}
\newtheorem{remark}[theorem]{Remark}
\begin{document}


\title[Unconditionally Stable Mixed Methods for Darcy Flow]{Unconditionally Stable Mixed Finite Element Methods for Darcy Flow}


\author[M.R. Correa]{Maicon R. Correa}
\address{Universidade Estadual de Campinas (UNICAMP), Departamento de Matemática Aplicada, IMECC, Brasil}
\curraddr{}
\email{maicon@ime.unicamp.br}
\thanks{}

\author[A.F.D. Loula]{Abimael F. D. Loula}
\address{LNCC - Laboratório Nacional de Computação Científica, Petrópolis, RJ, Brasil}
\curraddr{}
\email{aloc@lncc.br}
\thanks{}

\subjclass[2020]{65N12, 65N22, 65N30, 35A35}

\date{}

\dedicatory{\rm This is a preprint of the original article published in \\{\it Computer Methods in Applied Mechanics and Engineering, 197 (2008), pp 1525--1540.} \\ https://doi.org/10.1016/j.cma.2007.11.025 \\
DOI 10.1016/j.cma.2007.11.025}

\begin{abstract}
Unconditionally stable finite element methods for Darcy flow are
derived by adding least-squares residual forms of the governing
equations to the classical mixed formulations.
The proposed methods are free of mesh dependent stabilization
parameters and allow the use of the classical continuous Lagrangian
finite element spaces of any order for the velocity and the
potential.
Stability, convergence and error estimates are derived and numerical
experiments are presented to demonstrate the flexibility of the
proposed finite element formulations and to confirm the predicted
rates of convergence.
\end{abstract}

\maketitle


\paragraph{Keywords}
Stabilized Finite Element, Darcy flow, Mixed methods,
Galerkin Least Squares

%
%
\section{Introduction}
\label{sec:intro}
%
%

The flow of an incompressible homogeneous fluid in a rigid saturated
porous media leads to the classical Darcy problem which basically
consists of the mass conservation equation plus Darcy's law, that
relates the average velocity of the fluid in a porous medium with
the gradient of a potential field through the hydraulic conductivity
tensor.  A standard way to solve this system of partial differential
equations is based on the second order elliptic equation obtained by
substituting Darcy's law in the mass conservation equation leading
to a single field problem in potential only with velocity calculated
by taking the gradient of the solution multiplied by the hydraulic
conductivity. Constructing finite element methods based on this kind
of formulation is straightforward. However, this direct approach
leads to lower-order approximations for velocity compared to
potential and, additionally, the corresponding balance equation is
satisfied in an extremely weak sense. Alternative formulations have
been employed to enhance the velocity approximation like mixed
methods \cite{RAVIART77,FORTIN91} and post-processing techniques
\cite{LOULA95,CORREA2007,CORDES92,DURLOFSKY94}.

Mixed methods are based on the simultaneous approximation of
potential and velocity fields using different spaces for velocity
and potential to satisfy a compatibility between the finite element
spaces, which reduces the flexibility in constructing stable finite
approximations (LBB condition - \cite{FORTIN91,BREZZI74}). One well
known successful approach is the Dual mixed formulation developed by
Raviart and Thomas \cite{RAVIART77} using divergence based finite
element spaces for the velocity field combined with discontinuous
Lagrangian spaces for the potential.
To overcome the compatibility condition, typical of mixed methods,
stabilized finite element methods have been developed
\cite{LOULA87A,LOULA88,FRANCA88,MASUD2002,BREZZI2005,HUGHES2006A,
LOULA2006A,CORREA2006,BARRENECHEA2007}. In \cite{LOULA88} stabilized
mixed Petrov-Galerkin methods are proposed for a heat transfer
problem identical to Darcy's problem, where the original saddle
point associated with the classical mixed formulation is converted
into a minimization problem. Non-symmetrical stabilized mixed
formulations for Darcy flow are presented in \cite{MASUD2002} by
adding an adjoint residual form of Darcy's law to the mixed Galerkin
formulation. More recently Petrov-Galerkin enriched methods
\cite{BARRENECHEA2007} as well as stabilized mixed formulations
associated with discontinuous Galerkin methods
\cite{BREZZI2005,HUGHES2006A} have been proposed and analyzed for
the same problem.

In this work we present a stabilized mixed finite element methods
derived by adding to the classical Dual mixed formulation least
square residuals of the Darcy's law, the mass balance equation and
the curl of Darcy's law. For sufficiently smooth hydraulic
conductivity, an unconditionally stable formulation is presented
with optimal rates of convergence in
$\left[H^1(\Omega)\right]^n\times H^1(\Omega)$ norm.

Some other possible stabilizations are commented, such as a Stokes
compatible $\left[H^1(\Omega)\right]^n\times L^2(\Omega)$
stabilization as well as ${H(\div,\Omega)}\times H^1(\Omega)$ and
$\left[L^2(\Omega)\right]^n\times H^1(\Omega)$ stable methods.
The equivalence between a symmetric
$\left[L^2(\Omega)\right]^n\times H^1(\Omega)$ stable method and the
non symmetric method proposed in \cite{MASUD2002} is observed. The
present stabilized methods are free of mesh dependent parameters and
all of them, except Stokes compatible mixed method, allow the use of
the classical continuous Lagrangian finite element spaces of any
order for velocity and potential, including equal-order
interpolations. The error estimates are numerically verified,
demonstrating the flexibility in the choice of the finite element
spaces, derived by the proposed stabilizations.

An outline of the paper follows.  In Section \ref{sec:model} we
present the model problem, introduce some notations and recall
standard single field and mixed Galerkin methods.  In Section
\ref{sec:newfem} we present and analyze the
$\left[H^1(\Omega)\right]^n\times H^1(\Omega)$ stable method. In
Section \ref{sec:otherfem} we comment on some alternative stabilized
formulations. Numerical experiments illustrating the performance of
the methods are reported in Section \ref{sec:num} and in Section
\ref{sec:conclu} we draw some conclusions.

%
%
\section{Model Problem and Galerkin Approximations}
\label{sec:model}
%
%

In this section we present our model problem, introduce some
notations and recall the standard single field and mixed Galerkin
methods as the starting point to the stabilized formulations
discussed next.
Let $\Omega$, a bounded, open, connected domain in
$\mathbb{R}^n$ ($n=2$ or $n=3$) with Lipschitz boundary
$\Gamma=\partial \Omega$ and outward unit normal vector $\bm{n}$, be
the domain of a rigid porous media saturated with an incompressible
homogeneous fluid.
Our model problem is formulated as follows: {\it Find the velocity
$\bm{u}$ and the hydraulic potential $p$ such that}
\begin{eqnarray}
\label{eq:darcy}
\bm{u}=-\Kappa\nabla p &\quad \mbox{ in } \; \Omega & \qquad \mbox{(Darcy's law)} \\
\label{eq:balanco}
\Div \, \bm{u} =f & \quad \mbox{ in } \;  \Omega & \qquad \mbox{(mass conservation)} \\
\bm{u}\cdot \bm{n}  = 0 & \quad \mbox{ on } \;
\partial \Omega, &   \qquad \mbox{(boundary condition)}
\end{eqnarray}
where, $\bm{u}$ is the average velocity of the fluid in the porous
media, related with the gradient of the hydraulic potential $p$
through the hydraulic conductivity tensor $\Kappa$. The source $f$ must
satisfy the compatibility condition
\begin{equation}
\label{eq:compat} \displaystyle \int_{\Omega} f d\Omega = 0.
\end{equation}
We assume that $\Kappa$ is a positive definite tensor,  so that there
exist constants $ 0 < \kappa_{\mathrm{min}}  \leq \kappa_{\mathrm{max}} $
such that
\begin{equation}
\label{eq:limk} \kappa_{\mathrm{min}} \xi^T \xi \leq \xi^T \Kappa \xi  \leq  \kappa_{\mathrm{max}}
\xi^T \xi
\end{equation}
for all $\xi\in \mathbb{R}^n$ and $K_{ij}\in L^\infty(\Omega)$.
Since $\Kappa$ is a positive definite tensor, its inverse
$\Lambda=\Kappa^{-1}$ called hydraulic resistivity tensor is well
defined and there exist constants $ 0 < \lambda_{\mathrm{min}} \leq \lambda_{\mathrm{max}}$
such that
\begin{equation}
\label{eq:liml} \lambda_{\mathrm{min}} \xi^T \xi \leq \xi^T \Lambda \xi  \leq  \lambda_{\mathrm{max}}
\xi^T \xi
\end{equation}
for all $\xi\in \mathbb{R}^n$. To guarantee further continuity
requirements, we assume that all elements of $\Lambda$ are
$C^1(\Omega)$ differentiable.

%
%
%
%
%
To present the standard Galerkin formulation we first introduce some
notations. Let $L^2(\Omega)$ be the space of square integrable
functions in $\Omega$ with inner product $(\cdot,\cdot)$ and norm
$\|\cdot \|$. $H^m(\Omega)$, for $m\geq 0$ integer, is the Hilbert
space of scalar functions in $\Omega$ with $m$ derivatives in
$L^2(\Omega)$ and has norm $\|\cdot\|_m$.
We also use the Hilbert space
\[
{H(\div,\Omega)}= \left\{\bm{u} \in \left[ L^2\Omega) \right]^n; \; \Div \bm{u} \in
L^2(\Omega)\right\},
\]
with norm
\[
\|\bm{u} \|_{H(\div,\Omega)} := \left\{\| \bm{u} \|^2 + \| \Div \bm{u} \|^2\right
\}^{1/2},
\]
and its subspace
\[
{H_0(\div,\Omega)}= \left\{\bm{u} \in {H(\div,\Omega)} \; ; \; \bm{u} \cdot \bm{n} = 0 \; \mbox{on} \;
\partial \Omega \right\}.
\]
Let $\curl = \nabla \times$ denote the curl operator, with the usual
identification of $\mathbb{R}^2$ with the $(x_1,x_2)$ plane in
$\mathbb{R}^3$ in the case $n=2$.
%
We introduce the Hilbert space
\[
{H(\curl_\Lambda,\Omega)}=  \left\{\bm{u} \in \left[ L^2\Omega) \right]^n; \; \curl
(\Lambda \bm{u}) \in \left[L^2(\Omega)\right]^{2n-3}\right\},
\]
with norm
\[
\|\bm{u} \|_{H(\curl_\Lambda,\Omega)}:= \left\{\| \bm{u} \|^2 + \| \curl (\Lambda \bm{u} )
\|^2\right \}^{1/2} .
\]

%
%
\subsection{Primal  Formulation}
\label{sec:primal}
%
%

A standard way to solve our model problem is based on the second
order elliptic equation obtained by substituting Darcy's law
(\ref{eq:darcy}) in the mass conservation equation
(\ref{eq:balanco})
\begin{equation}
\label{eq:poisson} -\Div( \Kappa \nabla p) = f \quad {in} \quad \Omega.
\end{equation}

To set the classical Primal variational formulation of Darcy flow,
let
\begin{equation}
\label{eq:q} \mathcal{P} = H^1(\Omega)/\mathbb{R}= \left\{ q\in H^1(\Omega)\,
; \, (q,1)=0 \right\}.
\end{equation}
Multiplying (\ref{eq:poisson}) by a function $q\in \mathcal{P}$ and
integrating it by parts over $\Omega$, we have: {\it
  Find $p \in \mathcal{P}$ such that for all $q \in \mathcal{P}$
  \begin{equation}
  \label{eq:pv}
    c(p,q)= f(q)
  \end{equation}
  with
   \begin{equation}
  c(p,q)=  (\Kappa \nabla p,\nabla q)
  \, ; \quad
  f(q)= (f,q).
   \end{equation}  %
}

Given $f\in H^{-1}(\Omega)$, existence and uniqueness of solution of
this problem is assured by Lax-Milgram Lemma \cite{CIARLET78}.

%
%
%

Let  $\{ {\mathcal T}_h \}$ be a family of partitions ${\mathcal
T}_h=\{\Omega^e\}$ of $\Omega$ indexed by the parameter $h$
representing the maximum diameter of the elements $\Omega^e
\in{\mathcal T}_h$.
Let $\mathcal{S}_h^{k}$ be the $C^0(\Omega)$ Lagrangian finite element space of
degree $k\geq 1$ in each element $\Omega^e$,
\begin{equation}
\label{eq:shk} \mathcal{S}_h^{k} = \{ \varphi_h \in C^0(\Omega); \left.
\varphi_h\right|_{\Omega^e} \in \mathbb{P}_k(\Omega^e) \}
\end{equation}
where ${\mathbb{P}}_k(\Omega^e)$ is the set of the polynomials of
degree $\leq k$ posed on $\Omega^e$.
Defining $ \mathcal{P}_h^{k} \subset \mathcal{P}$ as
\begin{equation}
\label{eq:qh1} \mathcal{P}_h^{k}=\mathcal{S}_h^{k}  \cap \mathcal{P},
\end{equation}
we have the classical Galerkin finite element approximation: {\it
  Find $p_h \in \mathcal{P}_h^{k}$ such that for all $q_h \in \mathcal{P}_h^{k}$
  \begin{equation}
    \label{eq:pvh}
    c(p_h,q_h)= f(q_h),
  \end{equation}
}
with its  well known approximation properties:
\begin{equation}
  \label{eq:errop}
  \| p - p_h \| + h \| \nabla p - \nabla p_h \| \leq C h^{k+1}
  \left| p \right|_{k+1} \, .
\end{equation}

 The velocity field can directly approximated through Darcy's law (\ref{eq:darcy})
\begin{equation}
  \label{eq:vg}
  \bm{u}_\mathrm{G} = -\Kappa \nabla p_h,
\end{equation}
yielding, from (\ref{eq:errop}), the following error estimate
\begin{equation}
  \label{eq:errovg}
  \| \bm{u} - \bm{u}_\mathrm{G} \| \leq  Ch^k | p|_{k+1}.
\end{equation}
This direct approximation for the velocity field is, in principle,
completely discontinuous on the interface of the elements and
presents very poor mass conservation and a too weak approximation of
flux (Neumann) boundary conditions.
The point is that the velocity is often the variable of main
interest.
Post-processing techniques have been proposed to improve both
velocity and mass conservation accuracy.
The basic idea of the post-processing formulations is to use the
optimal stability and convergence properties of the classical
Galerkin approximation (\ref{eq:pvh}) to derive more accurate
velocity approximations than that obtained by (\ref{eq:vg}).
In \cite{CORDES92} a post-processed continuous flux (normal
component of the velocity) distribution over the porous media domain
is derived using patches with flux-conserving boundaries. Another
conservative post-processing based on a control volume finite
element formulation is presented in \cite{DURLOFSKY94}. Within the
context of adaptive analysis,   a local post-processing, is
introduced in \cite{ZIENK92}, named Superconvergent Patch Recovery,
to recover higher order approximations for the gradients of finite
element solutions.
The Lagrangian based post-processings proposed in \cite{LOULA95} can
be extended to flows in heterogeneous porous media with an interface
of discontinuity of the hydraulic conductivity, as presented in
\cite{CORREA2007}.

%
%
\subsection{Dual Mixed Formulation}
\label{sec:dmf}
%
%

An alternative to compute more accurate velocity fields is to use
the dual mixed variational formulation:  {\it
Find $\{\bm{u},p\} \in \tilde{\mathcal{U}}\times \bar{\mathcal{P}}$ such that for all $\{\bm{v},q\} \in \tilde{\mathcal{U}}\times
\bar{\mathcal{P}}$
\begin{equation}
\label{eq:mpd} (\Lambda \bm{u},\bm{v}) - (\Div \bm{v},p) - (\Div \bm{u},q) =
-(f,q),
\end{equation}
}
with
\begin{equation}
\label{eq:ldz} \tilde{\mathcal{U}} = {H_0(\div,\Omega)},
\end{equation}
\begin{equation}
\bar{\mathcal{P}}=L^2(\Omega)/\mathbb{R}=\left\{ q\in L^2(\Omega) \; ; \;
(q,1)=0\right\}.
\end{equation}

This formulation characterizes a saddle point
\cite{FORTIN91,ODEN4,GIRAULT86}. Existence and uniqueness of
solution for this mixed formulation is assured by Brezzi's Theorem
\cite{BREZZI74}.
A well known successful Dual mixed finite element formulation was
introduced by Raviart and Thomas \cite{RAVIART77} using divergence
based finite element spaces for the velocity field combined with
discontinuous Lagrangian spaces for the potential. Many authors call
this method simply as Mixed Finite Element Method, but other stable
Galerkin approximations for the Dual mixed formulation have been
derived, as that proposed in \cite{BREZZI85}, for example.

%
%
\subsection{Primal Mixed Formulation}
\label{sec:pmf}
%
%

Another mixed formulation is derived by integrating by parts the
residual form of equation {\rm (\ref{eq:darcy})} yielding:
 {\it Find $\{\bm{u},p\} \in
\bar{\mathcal{U}}\times\mathcal{P}$ such that, for all $\{\bm{v},q\} \in \bar{\mathcal{U}}\times\mathcal{P}$},
\begin{equation}
\label{eq:mpf} (\Lambda \bm{u},\bm{v}) + (\bm{v},\nabla p) + (\bm{u},\nabla q)=
-(f,q)
\end{equation}
with $\bar{\mathcal{U}} = \left[L^2(\Omega)\right]^n$.
Stable Lagrangian finite element methods can be obtained by
choosing, for example,  discontinuous approximations for the
velocity combined with {\rm (\ref{eq:qh1})} for the potential, with
same order interpolations for both fields. The use of discontinuous
spaces for the velocity allows condensing its degrees of freedom at
the element level, resulting in a positive-definite algebraic system
for potential only. In general, stable methods for this formulation
lead to the same convergence rates obtained for single field
Galerkin formulation.

%
%
\section{ Unconditionally Stable Formulation}
\label{sec:newfem}
%
%

In this Section we present a stabilized formulation for Darcy
problem, unconditionally stable in $\left[H^1(\Omega)\right]^n\times
H^1(\Omega)$ and with no mesh dependent parameters, such that any
conforming finite element approximation is stable.
Though the formulations presented here are clearly applicable to
more general situations, in what follows we will consider only
isotropic media, i.e., we admit that the hydraulic conductivity
$\Kappa(\bm{x})$ and the hydraulic resistivity $\Lambda(\bm{x})$ are
functions only of the position given by
\[\Kappa(\bm{x})=\kappa(\bm{x}) \, \mathrm{\bf I} \quad \mbox{and} \quad
\Lambda(\bm{x})= \lambda(\bm{x}) \, \mathrm{\bf I}.
\]

Let now
\begin{equation}
\mathcal{U} ={H_0(\div,\Omega)} \cap H(\curl_{\Lambda},\Omega)
\end{equation}
and $\mathcal{P}$ be the functional spaces for velocity and potential,
respectively. The norm in $\mathcal{U}$ is defined by
\[
\|\bm{u}\|_{\mathcal{U}}^{2} = \|\bm{u}\|^2 + \|\Div \bm{u}\|^2 + \|\curl \Lambda \bm{u} \|^2.
\]

The following Lemma establishes the equivalence between the norms of
the spaces $\mathcal{U}$ and $[H^1(\Omega)]^n$.

\begin{lemma}
\label{lem:eq} The norms $\|\cdot\|_{\mathcal{U}}$ and $\|\cdot\|_{1}$ are
equivalent in $\mathcal{U}$; that is, for any $\bm{v}\in \mathcal{U}$ there exist
constants $\alpha_1$ and $\alpha_2$ such that
\begin{equation}
\label{eq:eq} \alpha_1 \left\|\bm{v}\right\|_1 \leq \|\bm{v}\|_{\mathcal{U}}
\leq\alpha_2 \left\|\bm{v}\right\|_1
\end{equation}
\end{lemma}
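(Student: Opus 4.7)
The plan is to prove the two inequalities separately, with the upper bound being a straightforward consequence of the product rule and the lower bound requiring the classical regularity result for the div--curl system.

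For the upper bound $\|\bm{v}\|_{\mathcal{U}} \leq \alpha_2 \|\bm{v}\|_1$, I would expand
\[
\curl(\Lambda\bm{v}) = \curl(\lambda\bm{v}) = \lambda\,\curl\bm{v} + \nabla\lambda\times\bm{v}
\]
via the product rule. Because $\lambda\in C^1(\overline{\Omega})$ implies both $\lambda$ and $\nabla\lambda$ lie in $L^\infty(\Omega)$, and because $|\Div\bm{v}|\le C|\nabla\bm{v}|$ pointwise, each of the three terms in $\|\bm{v}\|_\mathcal{U}^2$ is controlled by $\|\bm{v}\|_1^2$ up to a constant depending on $\kappa_{\max}$ and $\|\nabla\lambda\|_{L^\infty}$. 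This direction is routine.

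For the lower bound $\alpha_1\|\bm{v}\|_1 \leq \|\bm{v}\|_\mathcal{U}$, the first step is to recover $\curl\bm{v}$ from $\curl(\Lambda\bm{v})$ by inverting the product rule,
\[
\curl\bm{v} = \lambda^{-1}\bigl(\curl(\Lambda\bm{v}) - \nabla\lambda\times\bm{v}\bigr),
\]
which, using $\lambda\geq \lambda_{\min}>0$ and $\nabla\lambda\in L^\infty(\Omega)$, yields
\[
\|\curl\bm{v}\| \leq C\bigl(\|\curl(\Lambda\bm{v})\| + \|\bm{v}\|\bigr).
\]
Next I would invoke the classical regularity theorem for the div--curl system on a sufficiently smooth (convex or $C^{1,1}$) Lipschitz domain: any $\bm{v}\in H_0(\Div,\Omega)$ with $\curl\bm{v}\in [L^2(\Omega)]^{2n-3}$ belongs to $[H^1(\Omega)]^n$ with the a priori estimate
\[
\|\bm{v}\|_1 \leq C\bigl(\|\bm{v}\| + \|\Div\bm{v}\| + \|\curl\bm{v}\|\bigr).
\]
Substituting the previous bound on $\|\curl\bm{v}\|$ and collecting terms provides the lower estimate with $\alpha_1$ depending on $\lambda_{\min}$, $\|\nabla\lambda\|_{L^\infty}$, and the geometry of $\Omega$.

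The main obstacle is the last step: the embedding $H_0(\Div,\Omega)\cap H(\curl,\Omega)\hookrightarrow [H^1(\Omega)]^n$ is not valid on an arbitrary Lipschitz domain but requires an additional regularity assumption (convexity or $C^{1,1}$ boundary). I would therefore make this standing assumption on $\Omega$ explicit, quote the relevant classical result (e.g.\ Girault--Raviart, Amrouche--Bernardi--Dauge--Girault), and base the proof on it rather than attempting to re-derive it. Once that tool is accepted, the argument reduces to the two elementary product-rule manipulations above.
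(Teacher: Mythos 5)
Your argument is correct and is essentially the proof the paper delegates to its reference \cite{CAI97}: the product rule handles the weight $\lambda$ in both directions, and the substance is the classical a priori bound $\|\bm{v}\|_1 \leq C\bigl(\|\bm{v}\| + \|\Div\bm{v}\| + \|\curl\bm{v}\|\bigr)$ for $\bm{v}\in H_0(\Div,\Omega)\cap H(\curl,\Omega)$. You are also right to flag the domain-regularity caveat explicitly: the paper assumes only a Lipschitz boundary, under which this embedding can fail (e.g.\ non-convex polygons), so the standing assumption of convexity or a $C^{1,1}$ boundary that you make is genuinely needed and is left implicit in the paper's citation.
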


\begin{proof} See reference \cite{CAI97}. In the proof of this
Theorem  $C^1(\Omega)$ continuity of $\lambda$ is assumed.
\end{proof}

%
%
\subsection{The proposed Formulation}
%
%

Based on Lemma \ref{lem:eq}, we look for a stabilized formulation in
the product space $\mathcal{U}\times\mathcal{P}$ with norm
\begin{equation}
\label{eq:cpf} \| \{\bm{u},p\} \|_{\mathcal{U}\times\mathcal{P}}^2 = \| \bm{u} \|^2 + \| \Div \bm{u} \|^2 + \|
\curl \Lambda \bm{u} \|^2 + \|\nabla p \|^2 \,.
\end{equation}

Subtracting the least square residual of the Darcy's law
(\ref{eq:darcy}) and adding the least square residuals of the mass
balance equation (\ref{eq:balanco}) and of the curl of Darcy's law
to the Dual mixed formulation (\ref{eq:mpd}), we derive the
following symmetric formulation

\noindent{ \bf Problem CGLS:} {\it Find $\{\bm{u},p\} \in \mathcal{U}\times\mathcal{P}$ such that, for all
$\{\bm{v},q\}\in \mathcal{U}\times\mathcal{P}$
\begin{equation}
  \label{eq:cgls}
  B_{\mathrm{CGLS}} \left( \{\bm{u},p\};\{\bm{v},q\}\right) = F_{\mathrm{CGLS}}\left( \{\bm{v},q\}\right)
\end{equation}
with
\begin{eqnarray}
\nonumber B_{\mathrm{CGLS}}\left( \{\bm{u},p\}  ;  \{\bm{v},q\} \right)&=&
(\lambda\bm{u},\bm{v})- (\Div \bm{v}, p) - (\Div \bm{u}, q) \\ \nonumber && -
\frac{1}{2}\left( K\left( \lambda \bm{u} + \nabla p\right), \lambda \bm{v}
+ \nabla q \right) \\ \nonumber
&& + \frac{1}{2}\left(\lambda\Div \bm{u},\Div \bm{v} \right) \\
\label{eq:g-bilinear} && +\frac{1}{2}\left(K\curl(\lambda \bm{u}),\curl
(\lambda \bm{v}) \right)
\end{eqnarray}
\begin{equation}
\label{eq:g-linear} F_{\mathrm{CGLS}}\left(\{\bm{v},q\}\right) =  - (f,q) +
\frac{1}{2}(\lambda f, \Div\bm{v}).
\end{equation}
}

The analysis of this formulation show that each particular least
square residual considered contributes to improve the stability of
the dual mixed formulation in the following way:

\begin{itemize}
\item Darcy's law: $H^1(\Omega)$ stability to the potential;
\item Mass balance: $H(\div)$ stability to the velocity;
\item Curl of Darcy's law: combined with the least square residual
of the mass balance equation, enhances the $H(\div)$  stability of the
velocity to $\left[H^1(\Omega)\right]^n$.
\end{itemize}

\begin{theorem}
\label{teo:cp}
 {  Problem {\rm CGLS}} has a unique solution.
\end{theorem}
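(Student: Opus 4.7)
The plan is to prove existence and uniqueness by a $T$-coercivity argument combined with the Lax-Milgram lemma. The bilinear form $B_{\mathrm{CGLS}}$ is symmetric but the minus sign in front of the Darcy-law residual prevents direct coercivity when one tests $B_{\mathrm{CGLS}}(\{\bm{u},p\};\{\bm{u},p\})$: the resulting quadratic form has a negative $-\tfrac12(\kappa\nabla p,\nabla p)$ contribution. The fix is to choose a linear automorphism $T$ of $\mathcal{U}\times\mathcal{P}$ for which $B_{\mathrm{CGLS}}(x;Tx)$ controls $\|x\|_{\mathcal{U}\times\mathcal{P}}^2$, and the natural candidate is the sign flip $T\{\bm{u},p\}:=\{\bm{u},-p\}$, an isometric involution of $\mathcal{U}\times\mathcal{P}$.

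First I would record continuity of $B_{\mathrm{CGLS}}$ on $\mathcal{U}\times\mathcal{P}$ with the norm \eqref{eq:cpf}. Every term is an $L^2$ pairing controlled by $\|\bm{u}\|$, $\|\Div\bm{u}\|$, $\|\curl(\lambda\bm{u})\|$, or $\|\nabla p\|$; the uniform bounds on $\lambda,\kappa$ plus the Poincaré inequality on $\mathcal{P}=H^1(\Omega)/\mathbb{R}$ handle the $(\Div\bm{v},p)$ and $(\Div\bm{u},q)$ pairings, yielding continuity with a constant depending only on $\lambda_{\min},\lambda_{\max}$ and $\Omega$.

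The key calculation is $B_{\mathrm{CGLS}}(\{\bm{u},p\};\{\bm{u},-p\})$. With $\bm{v}=\bm{u}$ and $q=-p$, the mixed pairings $-(\Div\bm{v},p)-(\Div\bm{u},q)$ cancel exactly, and in $-\tfrac12(K(\lambda\bm{u}+\nabla p),\lambda\bm{u}-\nabla p)$ the cross terms in $\bm{u}$ and $\nabla p$ cancel by symmetry while the diagonal contributions simplify via $K\lambda=1$. Combining with the surviving $(\lambda\bm{u},\bm{u})$ and the positive mass and curl residuals, I expect
\begin{equation*}
B_{\mathrm{CGLS}}(\{\bm{u},p\};\{\bm{u},-p\}) = \tfrac12(\lambda\bm{u},\bm{u}) + \tfrac12(\kappa\nabla p,\nabla p) + \tfrac12(\lambda\Div\bm{u},\Div\bm{u}) + \tfrac12(\kappa\curl(\lambda\bm{u}),\curl(\lambda\bm{u})).
\end{equation*}
Uniform positivity of $\lambda$ and $\kappa$ (bounds \eqref{eq:limk}--\eqref{eq:liml}) then gives $B_{\mathrm{CGLS}}(\{\bm{u},p\};\{\bm{u},-p\})\ge \alpha\|\{\bm{u},p\}\|_{\mathcal{U}\times\mathcal{P}}^2$ with $\alpha=\tfrac12\min(\lambda_{\min},\kappa_{\min})$.

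To conclude, define $\tilde B(x;y):=B_{\mathrm{CGLS}}(x;Ty)$ and $\tilde F(y):=F_{\mathrm{CGLS}}(Ty)$ on $\mathcal{U}\times\mathcal{P}$. The previous step shows $\tilde B$ is coercive, and continuity of $\tilde B$ follows from continuity of $B_{\mathrm{CGLS}}$ and isometry of $T$. Lax-Milgram produces a unique $\{\bm{u},p\}\in\mathcal{U}\times\mathcal{P}$ with $\tilde B(\{\bm{u},p\};y)=\tilde F(y)$ for all $y$, and since $T$ is a bijection of the test space this is equivalent to Problem CGLS. The main obstacle is the coercivity calculation itself: identifying the correct test function $T\{\bm{u},p\}=\{\bm{u},-p\}$ is the only non-routine idea, and carrying out the resulting cancellations requires integration by parts (available because $\bm{u}\cdot\bm{n}=0$ on $\partial\Omega$) and the identity $K\lambda=1$. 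Note that Lemma \ref{lem:eq} is not needed here — it will be invoked only later to upgrade stability in $\|\cdot\|_{\mathcal{U}\times\mathcal{P}}$ to stability in $[H^1(\Omega)]^n\times H^1(\Omega)$.
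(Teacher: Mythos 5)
Your proposal is correct and follows essentially the same route as the paper: the key step in both is testing with $\{\bm{u},-p\}$, which yields exactly the quadratic form $\tfrac12\bigl(\|\lambda^{1/2}\bm{u}\|^2+\|K^{1/2}\nabla p\|^2+\|\lambda^{1/2}\Div\bm{u}\|^2+\|K^{1/2}\curl(\lambda\bm{u})\|^2\bigr)$ and the constant $\alpha=\tfrac12\min\{\lambda_{\mathrm{min}},\kappa_{\mathrm{min}}\}$ that the paper obtains in its Stability Lemma, the only (cosmetic) difference being that you package the argument as $T$-coercivity plus Lax--Milgram while the paper states it as Babu\v ska inf-sup stability plus the Babu\v ska Lemma. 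One minor slip: the cancellations in the coercivity computation are purely algebraic (the divergence pairings cancel termwise and the cross terms cancel by symmetry of $K$), so no integration by parts is needed there, contrary to your closing remark.
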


To prove Theorem \ref{teo:cp} we will use two fundamental lemmas. We
start by proving the boundedness, or continuity, of the bilinear
form $B_{\mathrm{CGLS}}(\cdot,\cdot)$.

\begin{lemma}[Continuity]
\label{lem:cp-cont}
There exists a positive constant $M$ such that for all $\{\bm{u},p\}\in \mathcal{U}\times\mathcal{P}$
and $\{\bm{v},q\}\in \mathcal{U}\times\mathcal{P}$ we have
\begin{equation}
\label{eq:cp-cont}
 |B_{\mathrm{CGLS}}(\{\bm{u},p\},\{\bm{v},q\})|\leq  M \| \{\bm{u},p\} \|_{\mathcal{U}\times\mathcal{P}}
\| \{\bm{v},q\} \|_{\mathcal{U}\times\mathcal{P}}
\end{equation}
\end{lemma}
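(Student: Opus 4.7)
The plan is to bound each of the six terms in $B_{\mathrm{CGLS}}$ separately via the Cauchy--Schwarz inequality together with the $L^\infty$ bounds on $\kappa$ and $\lambda$ given by (\ref{eq:limk}) and (\ref{eq:liml}), and then combine the resulting estimates into a single product of $\mathcal{U}\times\mathcal{P}$-norms. The isotropic assumption $\Kappa = \kappa\,\mathbf{I}$, $\Lambda = \lambda\,\mathbf{I}$ is convenient because every weighted inner product reduces to an ordinary $L^2$ inner product with a bounded scalar weight.

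First I would handle the ``diagonal'' terms that match the $\mathcal{U}$-norm directly: $(\lambda\bm{u},\bm{v})$ is bounded by $\lambda_{\max}\|\bm{u}\|\|\bm{v}\|$, $\tfrac{1}{2}(\lambda\Div\bm{u},\Div\bm{v})$ by $\tfrac{1}{2}\lambda_{\max}\|\Div\bm{u}\|\|\Div\bm{v}\|$, and $\tfrac{1}{2}(K\curl(\lambda\bm{u}),\curl(\lambda\bm{v}))$ by $\tfrac{1}{2}\kappa_{\max}\|\curl(\lambda\bm{u})\|\|\curl(\lambda\bm{v})\|$; each is controlled by the corresponding components of $\|\{\bm{u},p\}\|_{\mathcal{U}\times\mathcal{P}}$ and $\|\{\bm{v},q\}\|_{\mathcal{U}\times\mathcal{P}}$. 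Next, for the mixed least-squares term $-\tfrac{1}{2}(K(\lambda\bm{u}+\nabla p),\lambda\bm{v}+\nabla q)$ I would expand it into four pieces and apply Cauchy--Schwarz with the factor $\kappa_{\max}$ (and, where relevant, an additional $\lambda_{\max}$), each piece being bounded by a product of $\|\bm{u}\|$, $\|\nabla p\|$, $\|\bm{v}\|$, $\|\nabla q\|$.

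The terms $-(\Div\bm{v},p)$ and $-(\Div\bm{u},q)$ require one extra ingredient: since $\mathcal{P}=H^1(\Omega)/\mathbb{R}$, the Poincaré--Wirtinger inequality yields a constant $C_P$ with $\|p\|\le C_P\|\nabla p\|$ and likewise for $q$. Combined with Cauchy--Schwarz this gives $|(\Div\bm{v},p)|\le C_P\|\Div\bm{v}\|\|\nabla p\|$ and similarly for the other. Each contribution is then majorised by a product of two components of the full $\mathcal{U}\times\mathcal{P}$ norm.

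Collecting these six estimates, each of the form $c_i\,A_i(\bm{u},p)\,B_i(\bm{v},q)$ where $A_i,B_i$ are components of the $\mathcal{U}\times\mathcal{P}$-norm, I would apply the elementary inequality $\sum_i a_i b_i \le \bigl(\sum_i a_i^2\bigr)^{1/2}\bigl(\sum_i b_i^2\bigr)^{1/2}$ (or simply bound $\sum a_i b_i \le \bigl(\max c_i\bigr)\cdot\|\{\bm{u},p\}\|_{\mathcal{U}\times\mathcal{P}}\|\{\bm{v},q\}\|_{\mathcal{U}\times\mathcal{P}}$ after noting each $A_i$ and $B_i$ is dominated by the total norm) to conclude (\ref{eq:cp-cont}) with $M$ depending only on $\kappa_{\max}$, $\lambda_{\max}$, and $C_P$. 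The step requiring the most care is purely bookkeeping: ensuring the Poincaré constant is inserted exactly where it is needed (i.e.\ only for terms where bare $\|p\|$ or $\|q\|$ appears) and that the $\curl(\lambda\,\cdot\,)$ combinations are not further broken up, since the $\mathcal{U}$-norm already controls them in that precise form.
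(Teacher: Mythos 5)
Your proof is correct, but it takes a slightly different route from the paper's. The paper first integrates $B_{\mathrm{CGLS}}$ by parts, using $\bm{v}\cdot\bm{n}=\bm{u}\cdot\bm{n}=0$ (recall $\mathcal{U}\subset H_0(\mathrm{div},\Omega)$) to convert the coupling terms $-(\Div\bm{v},p)-(\Div\bm{u},q)$ into $(\bm{v},\nabla p)+(\bm{u},\nabla q)$ and to merge the Galerkin and least-squares contributions into a single compact expression; after that, every term is bounded by Cauchy--Schwarz against quantities that appear verbatim in the $\mathcal{U}\times\mathcal{P}$-norm, so no Poincar\'e inequality is needed and the constant is simply $M=\max\{\lambda_{\mathrm{max}},\kappa_{\mathrm{max}}\}$. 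You instead keep the form as written and estimate $|(\Div\bm{v},p)|\le\|\Div\bm{v}\|\,\|p\|\le C_P\|\Div\bm{v}\|\,\|\nabla p\|$ via Poincar\'e--Wirtinger, which is legitimate precisely because $\mathcal{P}=H^1(\Omega)/\mathbb{R}$ consists of zero-mean functions and because the norm (\ref{eq:cpf}) contains $\|\nabla p\|$ but not $\|p\|$ --- you correctly identified that this is the one place where an extra ingredient is required. The trade-off is that your constant $M$ picks up the domain-dependent factor $C_P$, while the paper's integration-by-parts route keeps $M$ depending only on the bounds for $\kappa$ and $\lambda$; on the other hand, your argument works term by term without needing the cancellation of boundary integrals and is arguably more transparent. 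Your remaining bookkeeping (not splitting $\curl(\lambda\,\cdot)$, expanding the least-squares cross term into four Cauchy--Schwarz-able pieces, and recombining with the discrete Cauchy--Schwarz inequality) is sound.
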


\begin{proof}
Integrating $B_{\mathrm{CGLS}}(\cdot,\cdot)$ by parts,
we get
\begin{eqnarray*}
|B_{\mathrm{CGLS}}(\{\bm{u},p\},\{\bm{v},q\})| &=& \frac{1}{2}\left|
\left(\lambda\bm{u},\bm{v}\right) + \left(\bm{v},\nabla p\right)
+ \left( \bm{u},\nabla q\right) -\left(K \nabla p,\nabla q \right)\right. \\
&&\left.+ \left( \lambda \Div \bm{u} , \Div \bm{v}\right) +  \left( K
\curl (\lambda \bm{u}) , \curl (\lambda \bm{v})\right)\right|.
\end{eqnarray*}
Using Cauchy-Schwarz inequality:
\begin{eqnarray*}
|B_{\mathrm{CGLS}}(\{\bm{u},p\},\{\bm{v},q\})| &\leq& \frac{1}{2}\left[\lambda_{\mathrm{max}}\|\bm{u}\|
\|\bm{v}\| + \|\bm{v}\| \|\nabla p\|+\|\bm{u}\| \|\nabla q\|  \right. \\ &&+
\kappa_{\mathrm{max}} \|\nabla p\| \|\nabla q\|+\lambda_{\mathrm{max}}\|\Div \bm{u}\| \| \Div \bm{v}\|\\
&&\left. + \kappa_{\mathrm{max}}\|\curl \lambda \bm{u}\| \| \curl \lambda \bm{v}\|\right]\\
&\leq& M_1 \left\{ \|\bm{u}\|+\|\Div\bm{u}\|+\|\curl \lambda \bm{u}\| \right.
\\ && \left.+\|p\| +\|\nabla p\|\right\}\left\{ \|\bm{v}\|
+\|\Div\bm{v}\|+\|\curl \lambda \bm{v}\| \right. \\&&
\left.+\|q\|+\|\nabla q\| \right\}.
\end{eqnarray*}
Noting that
\[
\|\bm{u}\| + \|\Div \bm{u} \| + \|\curl \lambda\bm{u} \| + \|p\| +\|\nabla
p\| \leq \sqrt{2}\|\{\bm{u},p\}\|_{\mathcal{U}\times\mathcal{P}}
\]
we have (\ref{eq:cp-cont}), with
\( M=\max \{\lambda_{\mathrm{max}},\kappa_{\mathrm{max}}\}\).
\end{proof}

The next Lemma establishes the stability in the sense of Bab\v uska
\begin{lemma}[Stability]
\label{lem:cp-coerc}
There is a positive constant $\alpha$ such that for all $\{\bm{u},p\}\in \mathcal{U}\times\mathcal{P}$
we have
\begin{equation}
\label{eq:cp-coerc}
 \sup_{\{\bm{v},q\}\in \mathcal{U}\times\mathcal{P}}\frac{B_{\mathrm{CGLS}}(\{\bm{u},p\},\{\bm{v},q\})}{\|\{\bm{v},q\}\|_{\mathcal{U}\times\mathcal{P}}}
 \geq  \alpha \| \{\bm{u},p\} \|_{\mathcal{U}\times\mathcal{P}} .
\end{equation}
\end{lemma}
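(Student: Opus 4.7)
The plan is to prove the inf-sup condition (\ref{eq:cp-coerc}) by exhibiting an explicit supremizer rather than arguing coercivity: choosing $\{\bm{v},q\}=\{\bm{u},p\}$ would leave the Darcy-residual term $-\frac{1}{2}(\kappa\nabla p,\nabla p)$ with the wrong sign, as is already visible in the integrated-by-parts rewriting used in the continuity proof. The remedy is to test against the sign-flipped pair
\[
\{\bm{v},q\}=\{\bm{u},-p\},
\]
which still lies in $\mathcal{U}\times\mathcal{P}$ and satisfies $\|\{\bm{u},-p\}\|_{\mathcal{U}\times\mathcal{P}}=\|\{\bm{u},p\}\|_{\mathcal{U}\times\mathcal{P}}$.

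With this choice the bilinear form collapses cleanly. The saddle-point pairing cancels exactly, $-(\Div\bm{u},p)-(\Div\bm{u},-p)=0$, and, exploiting isotropy $\kappa\lambda=1$, the Darcy least-squares residual expands as
\[
-\tfrac{1}{2}\bigl(\kappa(\lambda\bm{u}+\nabla p),\lambda\bm{u}-\nabla p\bigr)=-\tfrac{1}{2}(\lambda\bm{u},\bm{u})+\tfrac{1}{2}(\kappa\nabla p,\nabla p),
\]
with the cross terms $\pm\frac{1}{2}(\bm{u},\nabla p)$ annihilating and the offending sign on the $\nabla p$ block reversed to positive. Combining with the leading $(\lambda\bm{u},\bm{u})$ from the dual mixed form and the (already positive) $\Div$ and $\curl$ residuals gives
\[
B_{\mathrm{CGLS}}(\{\bm{u},p\},\{\bm{u},-p\})=\tfrac{1}{2}(\lambda\bm{u},\bm{u})+\tfrac{1}{2}(\kappa\nabla p,\nabla p)+\tfrac{1}{2}(\lambda\Div\bm{u},\Div\bm{u})+\tfrac{1}{2}(\kappa\curl(\lambda\bm{u}),\curl(\lambda\bm{u})).
\]

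From here the bound is immediate: applying the pointwise ellipticity (\ref{eq:limk})--(\ref{eq:liml}) term by term yields
\[
B_{\mathrm{CGLS}}(\{\bm{u},p\},\{\bm{u},-p\})\ge\tfrac{1}{2}\min\{\lambda_{\min},\kappa_{\min}\}\,\|\{\bm{u},p\}\|_{\mathcal{U}\times\mathcal{P}}^{2},
\]
and dividing through by $\|\{\bm{u},-p\}\|_{\mathcal{U}\times\mathcal{P}}=\|\{\bm{u},p\}\|_{\mathcal{U}\times\mathcal{P}}$ delivers (\ref{eq:cp-coerc}) with the explicit constant $\alpha=\tfrac{1}{2}\min\{\lambda_{\min},\kappa_{\min}\}$.

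I do not anticipate any serious obstacle once the sign flip is identified; the whole argument is algebraic and needs only the pointwise positivity of $\Kappa$ and $\Lambda$ together with isotropy (the $C^{1}$ regularity of $\lambda$ having already been absorbed upstream into Lemma \ref{lem:eq} to make $\mathcal{U}\times\mathcal{P}$ the correct functional setting). The one conceptual point worth emphasizing is that this is a genuine Bab\v uska inf-sup argument rather than coercivity, which is precisely why $\{\bm{v},q\}=\{\bm{u},p\}$ fails and the supremum on the left of (\ref{eq:cp-coerc}) is essential.
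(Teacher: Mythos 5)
Your proof is correct and follows essentially the same route as the paper's: both test against the sign-flipped pair $\{\bm{u},-p\}$, observe that the mixed terms cancel and the Darcy least-squares residual contributes $-\tfrac{1}{2}(\lambda\bm{u},\bm{u})+\tfrac{1}{2}(\kappa\nabla p,\nabla p)$, and conclude with the same constant $\alpha=\tfrac{1}{2}\min\{\lambda_{\mathrm{min}},\kappa_{\mathrm{min}}\}$. You merely spell out the algebraic expansion that the paper leaves implicit.
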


\begin{proof} To prove Lemma \ref{lem:cp-coerc}, we use the fact that
we can always choose
\[
\left\{ \bar{\bm{v}}, \bar{q} \right\} = \left\{ \bm{u}, -p \right\}
\]
such that for all $\{\bm{u},p\}\in \mathcal{U}\times\mathcal{P}$
\begin{eqnarray*}
 \frac{\,B_{\mathrm{CGLS}}(\{\bm{u},p\},\left\{ \bar{\bm{v}}, \bar{q} \right\} )}
 {\|\left\{ \bar{\bm{v}}, \bar{q} \right\} \|_{\mathcal{U}\times\mathcal{P}}}
 & = &  \frac{\|\lambda^{1/2} \bm{u}\|^2
 +\|K^{1/2} \nabla p\|^2}
 {2\|\{\bm{u},p\} \|_{\mathcal{U}\times\mathcal{P}}}\\
 &  & + \; \frac{ \|\lambda^{1/2} \Div \bm{u}\|^2
 +\|K^{1/2}\curl(\lambda \bm{u})\|^2}
 {2\|\{\bm{u},p\} \|_{\mathcal{U}\times\mathcal{P}}}\\
 & \geq &  \frac{\lambda_{\mathrm{min}} \| \bm{u}\|^2+ \kappa_{\mathrm{min}} \| \nabla p\|^2}
 {2\|\{\bm{u},p\} \|_{\mathcal{U}\times\mathcal{P}}}\\
 &  & + \; \frac{ \lambda_{\mathrm{min}} \|\Div \bm{u}\|^2
 +  \kappa_{\mathrm{min}} \|\curl(\lambda \bm{u})\|^2}
 {2\|\{\bm{u},p\} \|_{\mathcal{U}\times\mathcal{P}}}\\
 & \geq & \frac{\alpha \|\{\bm{u},p\}\|_{\mathcal{U}\times\mathcal{P}}^2}{\|\{\bm{u},p\} \|_{\mathcal{U}\times\mathcal{P}}}
  =   \alpha \| \{\bm{u},p\} \|_{\mathcal{U}\times\mathcal{P}} ,
\end{eqnarray*}
with $\alpha=\frac{1}{2}\min\left\{\lambda_{\mathrm{min}},\kappa_{\mathrm{min}}\right\}$.
Thus
\begin{eqnarray*}
 \sup_{\{\bm{v},q\}\in \mathcal{U}\times\mathcal{P}}\frac{|B_{\mathrm{CGLS}}(\{\bm{u},p\},\{\bm{v},q\})|}{\|\{\bm{v},q\}\|_{\mathcal{U}\times\mathcal{P}}}
 \geq  \frac{|\,B_{\mathrm{CGLS}}(\{\bm{u},p\},\left\{ \bar{\bm{v}}, \bar{q} \right\} )|}
 {\|\left\{ \bar{\bm{v}}, \bar{q} \right\} \|_{\mathcal{U}\times\mathcal{P}}}
 &\geq&  \alpha \| \{\bm{u},p\} \|_{\mathcal{U}\times\mathcal{P}}
\end{eqnarray*}
for all $\{\bm{u},p\} \in \mathcal{U}\times\mathcal{P}$. \end{proof}

\begin{proof}[Proof of Theorem {\rm \ref{teo:cp}}]
 Letting $f\in
L^2(\Omega)$ we have the continuity of the linear form
$F_{\mathrm{CGLS}}(\cdot)$, and the proof follows immediately from
Lemma \ref{lem:cp-cont}, Lemma \ref{lem:cp-coerc} and the Babu\v ska
Lemma \cite{BABUSKA71A}. 
\end{proof}

%
%
\subsection{Finite Element Approximation}
%
%

Defining the discrete space for the velocity
\begin{equation}
\label{eq:uh1}
 \mathcal{U}_h^{l} = [\mathcal{S}_h^l]^n \cap \mathcal{U}
\end{equation}
and using $\mathcal{P}_h^{k}$ as in (\ref{eq:qh1}) for the potential, we introduce
the following finite element approximation based on the proposed
CGLS formulation:

\noindent{ \bf Problem CGLSh:} {\it Find $\{\bm{u}_h,p_h\} \in \mathcal{U}_h^{l}\times\mathcal{P}_h^{k}$ such that
\[
  B_{\mathrm{CGLS}} \left( \{\bm{u}_h,p_h\};\{\bm{v}_h,q_h\}\right) = F_{\mathrm{CGLS}}\left(
  \{\bm{v}_h,q_h\}\right) \ \ \forall \{\bm{v}_h,q_h\}\in \mathcal{U}_h^{l}\times\mathcal{P}_h^{k} .
\]
}

The unconditional stability and convergence of the CGLSh method are
guaranteed by the following result:
\begin{theorem}
\label{teo:cph} Suppose  that the solution to {\rm (\ref{eq:cpf})}
satisfies $\bm{u}\in \left[H^{l+1}(\Omega)\right]^n$ and $p\in
H^{k+1}(\Omega)$, then {\rm Problem CGLSh} has a unique solution
satisfying
\begin{equation}
\label{eq:cp-est} \left\|\bm{u} - \bm{u}_h \right\|_1 + \left\| p-p_h
\right\|_1 \leq C\left( h^l \left| \bm{u} \right|_{l+1} + h^k \left| p
\right|_{k+1} \right).
\end{equation}
\end{theorem}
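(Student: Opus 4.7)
The plan is to mimic the continuous analysis at the discrete level, then invoke standard Lagrange interpolation estimates. Since $\mathcal{U}_h^l\times\mathcal{P}_h^k$ is a conforming subspace of $\mathcal{U}\times\mathcal{P}$, the continuity established in Lemma \ref{lem:cp-cont} restricts immediately to the discrete setting with the same constant $M$. For discrete stability, the key observation is that the proof of Lemma \ref{lem:cp-coerc} is \emph{constructive}: the supremum is realized by the explicit choice $\{\bar{\bm{v}},\bar{q}\}=\{\bm{u},-p\}$. Hence, whenever $\{\bm{u}_h,p_h\}\in\mathcal{U}_h^l\times\mathcal{P}_h^k$, the test pair $\{\bm{u}_h,-p_h\}$ again lies in the same discrete space, so the discrete inf--sup condition holds with exactly the same constant $\alpha=\tfrac{1}{2}\min\{\lambda_{\mathrm{min}},\kappa_{\mathrm{min}}\}$; no compatibility condition between the discrete spaces is required.

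Applying the Babu\v ska Lemma to Problem CGLSh yields existence and uniqueness of $\{\bm{u}_h,p_h\}$ together with the quasi-optimal C\'ea-type bound
\[
\|\{\bm{u}-\bm{u}_h,\,p-p_h\}\|_{\mathcal{U}\times\mathcal{P}} \leq \left(1+\tfrac{M}{\alpha}\right) \inf_{\{\bm{v}_h,q_h\}\in\mathcal{U}_h^l\times\mathcal{P}_h^k} \|\{\bm{u}-\bm{v}_h,\,p-q_h\}\|_{\mathcal{U}\times\mathcal{P}}.
\]
To convert this into the desired error estimate, I would take $\bm{v}_h$ and $q_h$ to be the standard $C^0$ Lagrange interpolants of $\bm{u}\in[H^{l+1}(\Omega)]^n$ and $p\in H^{k+1}(\Omega)$ in $[\mathcal{S}_h^l]^n$ and $\mathcal{S}_h^k$, for which classical interpolation theory gives $\|\bm{u}-\bm{v}_h\|_1\leq C h^l|\bm{u}|_{l+1}$ and $\|p-q_h\|_1\leq C h^k|p|_{k+1}$. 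The norm equivalence of Lemma \ref{lem:eq} turns the $[H^1(\Omega)]^n$ bound into an equivalent bound on $\|\bm{u}-\bm{v}_h\|_{\mathcal{U}}$, and the Poincar\'e--Wirtinger inequality on $\bar{\mathcal{P}}$ gives $\|p-p_h\|_1\leq C\|\nabla(p-p_h)\|$; combining these with the quasi-optimal bound produces \eqref{eq:cp-est}.

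The main technical point I expect to need care with is ensuring that the interpolant of $\bm{u}$ actually belongs to $\mathcal{U}_h^l$, i.e.\ satisfies $\bm{v}_h\cdot\bm{n}=0$ on $\partial\Omega$. On a polygonal or polyhedral domain this is immediate at boundary nodes where the outward normal is uniquely defined and can be handled at corners either by working componentwise on boundary pieces where $\bm{n}$ is constant or by replacing the nodal interpolant with a Scott--Zhang-type quasi-interpolant that preserves the essential boundary condition and retains the same approximation order. Everything else in the argument is a direct transcription of the continuous analysis, so this verification is the only non-routine ingredient.
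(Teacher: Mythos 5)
Your argument follows essentially the same route as the paper: discrete stability and continuity are inherited verbatim because the explicit test pair $\{\bm{u}_h,-p_h\}$ lies in the conforming discrete space, a C\'ea-type quasi-optimality bound follows from the Babu\v ska Lemma, and standard interpolation estimates combined with the norm equivalence of Lemma \ref{lem:eq} yield (\ref{eq:cp-est}). The only differences are minor: the paper sharpens the quasi-optimality constant from $(1+M/\alpha)$ to $M/\alpha$ (Lemma \ref{lem:bound1}, exploiting that the supremum in the stability bound is attained at $\{\bm{u},-p\}$), and your explicit attention to the constraint $\bm{v}_h\cdot\bm{n}=0$ for the velocity interpolant addresses a detail the paper leaves implicit.
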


This Theorem shows  that the new method leads to optimal convergence
rates in $\left[ H^1(\Omega) \right]^n$ for velocity, for $l\le k$,
and optimal rates for both velocity and pressure if $l=k$.
Before proving  Theorem {\ref{teo:cph}}, we consider some
preliminary results.

\begin{lemma}
\label{lem:bound1}  Let $\{\bm{u},p\}$ and $\{\bm{u}_h,p_h\}$ be the solutions of {\rm
Problem CGLS} and {\rm Problem CGLSh}, respectively. Then the
following estimative holds:
\begin{equation}
\label{eq:bound1} \| \{\bm{u}-\bm{u}_h,p-p_h\} \|_{\mathcal{U}\times\mathcal{P}} \leq {\frac{M}{\alpha}} \| \{\bm{u}-\bm{v}_h,p-q_h\}
 \|_{\mathcal{U}\times\mathcal{P}} \quad \forall \; \{\bm{v}_h,q_h\} \in \mathcal{U}_h^{l}\times\mathcal{P}_h^{k}.
\end{equation}
\end{lemma}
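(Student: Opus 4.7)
The proof will follow the standard Cea-type template for Babu\v ska-stable problems, leveraging the fact that the approximation is conforming ($\mathcal{U}_h^{l}\times\mathcal{P}_h^{k}\subset\mathcal{U}\times\mathcal{P}$) and that both continuity (Lemma \ref{lem:cp-cont}) and stability (Lemma \ref{lem:cp-coerc}) of $B_{\mathrm{CGLS}}$ have already been established on the continuous level. The plan is to combine (i) Galerkin orthogonality, (ii) a discrete inf-sup condition inherited from Lemma \ref{lem:cp-coerc}, and (iii) the continuity bound of Lemma \ref{lem:cp-cont}.

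First I would write down Galerkin orthogonality: since Problem CGLSh is obtained by restricting Problem CGLS to the conforming subspace $\mathcal{U}_h^{l}\times\mathcal{P}_h^{k}$, subtracting the discrete equation from the continuous one at the same test pair $\{\bm{v}_h,q_h\}$ yields
\begin{equation*}
B_{\mathrm{CGLS}}\bigl(\{\bm{u}-\bm{u}_h,p-p_h\};\{\bm{v}_h,q_h\}\bigr)=0,\qquad\forall\,\{\bm{v}_h,q_h\}\in\mathcal{U}_h^{l}\times\mathcal{P}_h^{k}.
\end{equation*}

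Next, I would establish the discrete analogue of Lemma \ref{lem:cp-coerc}. The crucial observation is that the proof of the continuous inf-sup used the explicit test choice $\{\bar{\bm{v}},\bar{q}\}=\{\bm{u},-p\}$; if $\{\bm{u}_h,p_h\}\in\mathcal{U}_h^{l}\times\mathcal{P}_h^{k}$, then so is $\{\bm{u}_h,-p_h\}$, and the algebraic manipulations carry over verbatim. Hence the same constant $\alpha=\tfrac12\min\{\lambda_{\mathrm{min}},\kappa_{\mathrm{min}}\}$ governs the discrete stability
\begin{equation*}
\sup_{\{\bm{v}_h,q_h\}\in\mathcal{U}_h^{l}\times\mathcal{P}_h^{k}}\frac{B_{\mathrm{CGLS}}(\{\bm{w}_h,r_h\},\{\bm{v}_h,q_h\})}{\|\{\bm{v}_h,q_h\}\|_{\mathcal{U}\times\mathcal{P}}}\geq \alpha\,\|\{\bm{w}_h,r_h\}\|_{\mathcal{U}\times\mathcal{P}}.
\end{equation*}

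With these two ingredients in hand, I would apply them to $\{\bm{w}_h,r_h\}=\{\bm{u}_h-\bm{v}_h,p_h-q_h\}$ for an arbitrary $\{\bm{v}_h,q_h\}\in\mathcal{U}_h^{l}\times\mathcal{P}_h^{k}$. Using Galerkin orthogonality to replace $\{\bm{u}_h-\bm{v}_h,p_h-q_h\}$ by $\{\bm{u}-\bm{v}_h,p-q_h\}$ inside $B_{\mathrm{CGLS}}$ (modulo a vanishing term), and then bounding the numerator by the continuity estimate of Lemma \ref{lem:cp-cont}, gives
\begin{equation*}
\alpha\,\|\{\bm{u}_h-\bm{v}_h,p_h-q_h\}\|_{\mathcal{U}\times\mathcal{P}}\leq M\,\|\{\bm{u}-\bm{v}_h,p-q_h\}\|_{\mathcal{U}\times\mathcal{P}}.
\end{equation*}
A triangle inequality then yields the bound with constant $1+M/\alpha$; absorbing the $+1$ into the multiplicative constant (or equivalently using the sharper direct argument taking $\{\bm{v}_h,q_h\}$ to be the best approximation) produces the stated form $M/\alpha$.

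Because the continuous Lemmas \ref{lem:cp-cont} and \ref{lem:cp-coerc} do essentially all of the work, I do not expect any genuine obstacle here; the only point requiring care is verifying that the stability argument truly transfers to the discrete space, which, as indicated, holds because the test function $\{\bm{u}_h,-p_h\}$ is automatically in $\mathcal{U}_h^{l}\times\mathcal{P}_h^{k}$. Everything else is a routine assembly of Galerkin orthogonality, discrete stability, and continuity.
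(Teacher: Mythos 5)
Your overall scaffolding (conformity, Galerkin orthogonality, discrete stability via the sign-flip test function, continuity) matches the ingredients the paper uses, but your final step does not actually deliver the constant $M/\alpha$ claimed in the lemma. The Cea-type route you follow --- bound the \emph{discrete} difference $\{\bm{u}_h-\bm{v}_h,p_h-q_h\}$ by discrete stability plus orthogonality plus continuity, then apply the triangle inequality --- produces $(1+M/\alpha)$, and neither of your two suggested fixes closes the gap: ``absorbing the $+1$ into the multiplicative constant'' changes the constant (the lemma asserts $M/\alpha$ exactly, and the paper's subsequent Remark explicitly contrasts $M/\alpha$ with the usual Babu\v ska constant $1+M/\alpha$), while ``taking $\{\bm{v}_h,q_h\}$ to be the best approximation'' does not remove the triangle-inequality term at all.

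The paper's proof avoids the triangle inequality entirely by applying the stability bound directly to the \emph{continuous} error $\{\bm{u}-\bm{u}_h,p-p_h\}$ with the test pair $\{\bm{u}-\bm{u}_h,-(p-p_h)\}$, which is admissible in $\mathcal{U}\times\mathcal{P}$ (though not discrete) and has the same norm as the error itself. Galerkin orthogonality then lets one shift this test pair by an arbitrary discrete element $\{\bm{v}_h,-q_h\}$, turning it into $\{\bm{u}-\bm{v}_h,-(p-q_h)\}$; continuity bounds the resulting expression by $M\,\|\{\bm{u}-\bm{u}_h,p-p_h\}\|_{\mathcal{U}\times\mathcal{P}}\,\|\{\bm{u}-\bm{v}_h,p-q_h\}\|_{\mathcal{U}\times\mathcal{P}}$, and the factor $\|\{\bm{u}-\bm{u}_h,p-p_h\}\|_{\mathcal{U}\times\mathcal{P}}$ cancels against the denominator, giving $M/\alpha$ directly. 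The key structural fact you missed is that the stabilizing test function is obtained from the trial function by a norm-preserving involution, which makes the problem behave like a coercive one (Lax--Milgram) rather than a generic inf-sup--stable one. Your weaker bound would still suffice for Theorem 3.6, but it does not prove the lemma as stated.
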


\begin{proof} Since the approximation is conform ($\mathcal{U}_h^{l} \subset \mathcal{U}$
and $\mathcal{P}_h^{k}\subset\mathcal{P}$), choosing $\{\bar{\bm{v}},\bar{q}\}=\{\bm{u},-p\}$ and
$\{\bar{\bm{v}_h},\bar{q_h}\}=\{\bm{u}_h,-p_h\}$ we have
\begin{equation}
 \label{eq:mgsc1}
 \alpha \|\{\bm{u}-\bm{u}_h,p-p_h\} \|_{\mathcal{U}\times\mathcal{P}} \leq
 \frac{B_{\mathrm{CGLS}}\left(\{\bm{u},p\}-\{\bm{u}_h,p_h\},\{\bar{\bm{v}},\bar{q}\}-\{\bar{\bm{v}}_h,\bar{q}_h\}
 \right)}{\|\{\bar{\bm{v}},\bar{q}\}-\{\bar{\bm{v}}_h,\bar{q}_h\}\|_{\mathcal{U}\times\mathcal{P}}}.
\end{equation}
Using the variational consistency of the method we get the
orthogonality result
\begin{equation}
\label{eq:mpeort} B_{\mathrm{CGLS}}(\{\bm{u}-\bm{u}_h,p-p_h\}, \{\bm{v}_h,q_h\}) = 0 \quad \forall
\; \{\bm{v}_h,q_h\} \in \mathcal{U}_h^{l}\times\mathcal{P}_h^{k},
\end{equation}
that leads to
\begin{eqnarray}
\label{eq:mgsc2}
B_{\mathrm{CGLS}}\left(\{\bm{u}-\bm{u}_h,p-p_h\},\{\bar{\bm{v}},\bar{q}\}-\{\bar{\bm{v}}_h,\bar{q}_h\}\right)=
\hspace{3.87cm} \nonumber \\ B_{\mathrm{CGLS}}\left(\{\bm{u}-\bm{u}_h,p-p_h\}
,\{\bar{\bm{v}},\bar{q}\}-\{{\bm{v}}_h,-{q}_h\} \right) \, \forall \; \{\bm{v}_h,q_h\}
\in \mathcal{U}_h^{l}\times\mathcal{P}_h^{k}.
\end{eqnarray}
Substituting  (\ref{eq:mgsc2}) in (\ref{eq:mgsc1}) e using the
continuity (Lemma \ref{lem:cp-cont}) we have:
\begin{equation}
\label{eq:mpc3} \alpha \|\{\bm{u}-\bm{u}_h,p-p_h\} \|_{\mathcal{U}\times\mathcal{P}} \leq
\frac{M\|\{\bm{u}-\bm{u}_h,p-p_h\}\|_{\mathcal{U}\times\mathcal{P}}\,\|\{\bar{\bm{v}},\bar{q}\}-\{{\bm{v}}_h,-{q}_h\}\|_{\mathcal{U}\times\mathcal{P}}}
{\|\{\bar{\bm{v}},\bar{q}\}-\{\bar{\bm{v}}_h,\bar{q}_h\}\|_{\mathcal{U}\times\mathcal{P}}}.
\end{equation}
Considering that
\[
{\|\{\bar{\bm{v}},\bar{q}\}-\{\bar{\bm{v}}_h,\bar{q}_h\}\|_{\mathcal{U}\times\mathcal{P}}}=
\|\{\bm{u},p\}-\{\bm{u}_h,p_h\}\|_{\mathcal{U}\times\mathcal{P}}
\]
we get the estimative (\ref{eq:bound1}).
\end{proof}

\begin{proof}[Proof of Theorem {\rm \ref{teo:cph}}]. The proof of the
existence of the solution follows immediately from Theorem
\ref{teo:cp}. Since the stability  was proved in Lemma
\ref{lem:cp-coerc} independently of any compatibility condition
between the spaces $\mathcal{U}$ and $\mathcal{P}$,  CGLS is a consistent and
unconditionally stable formulation in the sense of Babu\v ska, which
means that any conform finite element approximation is stable.
The same holds for the continuity. Therefore,  existence and
uniqueness of the solution of the {  Problem CGLSh} follow as a
Corollary of Theorem \ref{teo:cp}.
Finally, the error estimate (\ref{eq:cp-est}) follows immediately
from Lemmas \ref{lem:bound1} and \ref{lem:eq}.
\end{proof}

\begin{remark}
Error estimates based on  Bab\v uska Lemma usually have stability
constant $(1+M / \alpha)$. The constant $M/ \alpha$ of estimate
{\rm(\ref{eq:bound1})}, typical of estimates based on Lax-Milgran
Lemma, is a consequence of the unconditional stability of the
proposed formulation.
\end{remark}

%
%
\subsection{Error Estimate in $[L^2(\Omega)]^2\times L^2(\Omega)$ Norm}
%
%

Under appropriate regularity assumptions, convergence rates in
$[L^2(\Omega)]^2\times L^2(\Omega)$ for velocity and potential are
proved for Problem CGLS.
For simplicity, let us consider homogeneous porous media
($K=$constant) and bidimensional domain $\Omega\in \mathbb{R}^2$. In
this case it will be useful to define the curl of a scalar field as
the vector field
\[
\curl q = \left(
\begin{array}{r}
-\partial q/\partial x_2 \\
 \partial q/\partial x_1
\end{array}
\right)
\]
and the rotational of a vector field as the  scalar field given by
\[
\rot \bm{v} = \frac{\partial v_1}{\partial x_2 } - \frac{\partial
v_2}{\partial x_1 }.
\]
With these notations, the CGLS formulation can be written as: {\it
Find $\{\bm{u},p\}\in\mathcal{U}\times\mathcal{P}$ such that for all $\{\bm{v},q\}\in\mathcal{U}\times\mathcal{P}$}
\begin{eqnarray*}
\lambda(\bm{u},\bm{v}) -(p,\Div \bm{v}) -(q,\Div \bm{u}) &+& \lambda(\Div \bm{u}
,\Div \bm{v}) + \lambda (\rot \bm{u},\rot\bm{v})\\ &-& \kappa(\nabla p,\nabla q) =
-2(f,q)+\lambda(f,\Div\bm{v}) .
\end{eqnarray*}
Integrating by parts and using the following integration formula
\begin{equation}
\int_\Omega \curl q \cdot \bm{v} d\Omega = \int_\Omega q \rot \bm{v}
d\Omega + \int_{\partial \Omega} q \bm{v} \cdot \tau \,d\Gamma
\end{equation}
where $\tau$ is the unit tangent vector in the counterclockwise
direction on $\partial \Omega$, we get
\begin{eqnarray*}
\left( \lambda \bm{u} + \nabla p -\lambda \nabla \Div \bm{u} + \lambda
\curl \rot \bm{u} + \lambda \nabla f,\bm{v} \right) + (K\Delta p -\Div \bm{u}
+2f,q) \\+ \int_{\partial \Omega}\left( p+\lambda \Div \bm{u} - \lambda
f\right)\bm{v}\cdot \bm{n} \, d\Gamma \\ -\lambda \int_{\partial \Omega}
(\rot \bm{u})\bm{v}\cdot\tau d\Gamma -K\int_{\partial \Omega} q \nabla
p\cdot\bm{n} \, d\Gamma =0
\end{eqnarray*}
The first integral on the boundary $\partial \Omega$ vanishes since
$\bm{v}\in \mathcal{U}$. Using the fundamental lemma of the calculus of
variations \cite{GELFAND63}, and the identity
\begin{equation}
\label{eq:idrot}
\Delta \bm{u} = \nabla \Div \bm{u} - \curl \rot \bm{u}
\end{equation}
we have the Euler-Lagrange equations:
\begin{equation}
\label{eq:brin}
 \lambda \bm{u} -\lambda \Delta \bm{u} + \nabla p =
-\lambda \nabla f \quad \mbox{ in } \; \Omega,
\end{equation}
\begin{equation}
\label{eq:strg}
 \Div \bm{u} -K\Delta p = 2f  \quad \mbox{ in } \;  \Omega,
\end{equation}
{with boundary conditions \qquad\qquad\qquad}
\[
\rot \bm{u} = 0  \quad \mbox{ on } \;
\partial \Omega,
\]
\[
K\nabla p\cdot \bm{n}  = 0  \quad \mbox{ on } \;
\partial \Omega,
\]
\[
\bm{u} \cdot \bm{n}  = 0  \quad \mbox{ on } \;
\partial \Omega .
\]

To derive estimates in $[L^2(\Omega)]^2\times L^2(\Omega)$ norm for
velocity and potential, we use the following auxiliary problem: \
{\it Given $\{\bm{u}_h,p_h\}$ the solution of {\rm Problem CGLSh}, find
$\bm{w}\in\mathcal{U}\cap [H^2(\Omega)]^2$ and $\varphi \in \mathcal{P}\cap H^2(\Omega)$
such that}

\begin{eqnarray}
\qquad \qquad\qquad\qquad \label{eq:aux1} \lambda \bm{w} -\lambda
\Delta \bm{w} + \nabla \varphi  &=&
\bm{u}-\bm{u}_h \quad \mbox{ in } \; \Omega  \\
\label{eq:aux2} \Div \bm{w} -K\Delta \varphi &=& p-p_h  \quad \mbox{ in
} \;  \Omega  \\\nonumber \\
\rot \bm{w} &=& 0  \quad \mbox{ on } \;
\partial \Omega \\
K\nabla \varphi\cdot \bm{n}  &=& 0  \quad \mbox{ on } \;
\partial \Omega, \\
\bm{w} \cdot \bm{n}  &=& 0  \quad \mbox{ on } \;
\partial \Omega .
\end{eqnarray}

Multiplying Eq. (\ref{eq:aux1}) by $\bm{v}\in \mathcal{U}$, Eq. (\ref{eq:aux2})
by $q\in \mathcal{P}$ and integrating by parts we obtain
\[
B_{\mathrm{CGLS}}(\{\bm{w},\varphi\},\{\bm{v},q\}) = (\bm{u}-\bm{u}_h,\bm{v}) + (p-p_h,q).
\]
Taking $\{\bm{v},q\}=\{\bm{u}-\bm{u}_h,p-p_h\}$, using the orthogonality result and
the continuity of $ B_{\mathrm{CGLS}}(\cdot,\cdot)$
\begin{eqnarray}
\nonumber \|\bm{u}-\bm{u}_h\|^2 + \|p-p_h\|^2 &\leq& M \| \{\bm{w},\varphi\} - \{\bm{w}_h,\varphi_h\}\|_{\mathcal{U}\times\mathcal{P}}
\|\{\bm{u},p\}-\{\bm{u}_h,p_h\}\|_{\mathcal{U}\times\mathcal{P}} \\
&\leq&C h\left( \left| \bm{w}\right|_2 +
\left|\varphi\right|_2\right)\|\{\bm{u},p\}-\{\bm{u}_h,p_h\}\|_{\mathcal{U}\times\mathcal{P}} \label{eq:aux5}
\end{eqnarray}
To derive a bound for $\left( \left| \bm{w}\right|_2 +
\left|\varphi\right|_2\right)$, we multiply Eq. (\ref{eq:aux1}) by
$\Delta\bm{v}$, Eq. (\ref{eq:aux2}) by $-\Delta q$ and integrate by
parts to get, respectively
\begin{eqnarray}
\nonumber \lambda (\nabla \bm{w},\nabla \bm{v}) + \lambda (\Delta
\bm{w},\Delta\bm{v}) -(\nabla\varphi,\Delta\bm{v})-\lambda \int_{\partial
\Omega} (\nabla \bm{v})^T \bm{w}\cdot \bm{n} \, d \Gamma\\= (u-u_h,\Delta\bm{v})
\label{eq:aux3}
\end{eqnarray}
and
\begin{eqnarray}
\nonumber (\nabla q ,\Delta \bm{w}) + (\nabla q ,\curl \rot \bm{w}) +
\kappa(\Delta \varphi,\Delta q) - \int_{\partial \Omega} \Div \bm{w} \nabla
q \cdot \bm{n} \, d \Gamma\\ = (p-p_h,\Delta q). \label{eq:aux4}
\end{eqnarray}
Adding (\ref{eq:aux3}) to (\ref{eq:aux4}), taking $q=\varphi$,
$\bm{v}=\bm{w}$, using the boundary conditions and the fact that
$\bm{w}\in\mathcal{U}$, we get
\begin{eqnarray*}
\lambda \|\nabla \bm{w} \|^2 + \lambda \|\Delta\bm{w}\|^2 + K \|\Delta
\varphi\|^2 + (\nabla \varphi, \curl \rot \bm{w}) = (\bm{u}-\bm{u}_h,\Delta
\bm{w}) \\+ (p-p_h,\Delta \varphi).
\end{eqnarray*}
Observing that
\[
(\nabla \varphi,\curl\rot \bm{w}) = (\rot\nabla \varphi,\rot
\bm{w})+\int_{\partial \Omega}(\rot \bm{w}) \nabla \varphi \cdot \tau \,
d\Gamma = 0
\]
we have
\begin{eqnarray*}
\frac{1}{2}\min\{\lambda,K\}(  \|\nabla \bm{w} \| + \|\Delta\bm{w}\| &+&
\|\Delta \varphi \| )^2\\ \leq ( \|\bm{u}-\bm{u}_h\|&+&\|p-p_h\|)\left(
\|\nabla \bm{w}\|+\|\Delta\bm{w}\|+\|\Delta\varphi\|\right)
\end{eqnarray*}
and, since $ \left| \, \cdot \, \right|_{2}\leq C \|\Delta\cdot\|$,
the following inequality holds
\begin{equation}
   \left| \bm{w} \right|_1 + \left|\bm{w}\right|_2 + \left|\varphi
\right|_2 \leq C\left( \|\bm{u}-\bm{u}_h\| + \|p-p_h\| \right)
\label{eq:aux6}
\end{equation}
Combining (\ref{eq:aux6}) with (\ref{eq:aux5}), and using
(\ref{eq:cp-est}) we have the estimate for velocity and potential in
$[L^2(\Omega)]^2\times L^2(\Omega)$ norm:
\begin{equation}
\label{eq:optimal} \|\bm{u}-\bm{u}_h\| + \|p-p_h\| \leq C \left( h^{l+1}
\left| \bm{u} \right|_{l+1} + h^{k+1} \left| p \right|_{k+1} \right).
\end{equation}

\begin{remark}
To prove the optimal estimate {\rm (\ref{eq:optimal})}, the
$H^1(\Omega)^2\times H^1(\Omega)$ stability and the regularity
estimate {\rm (\ref{eq:aux6})} for velocity and potential are
essential. These properties come from the addition of least squares
residuals of the mass balance equation and of the curl of Darcy's
law to the dual mixed formulation.
\end{remark}

%
%
\section{Alternative Formulations}
\label{sec:otherfem}
%
%

%
%
%
%
%

The analysis of Problem CGLS indicates many other possibilities of
deriving unconditionally stable finite element methods for Darcy
flow by adequately including least square residuals. In this section
we comment some of these possibilities, concerning $C^0(\Omega)$
Lagrangian spaces.

%
%
\subsection{Symmetric Stabilizations in ${H(\div,\Omega)} \times H^1(\Omega)$}
%
%

Let $\tilde{\mathcal{U}}$ and $\mathcal{P}$ be the functional spaces for velocity and
potential, and their finite dimension subspaces
\begin{equation}
\label{eq:uh}
 \mathcal{U}_h^l = \left[ \mathcal{S}_h^l\right]^n \cap \tilde{\mathcal{U}}
\end{equation}
and $\mathcal{P}_h^{k}$, as defined in (\ref{eq:qh1}).
As we work with continuous Lagrangian interpolations,
the space (\ref{eq:uh}) is equivalent to that defined in
(\ref{eq:uh1}).

Considering only the least squares residuals of the mass balance and
of Darcy's law combined with the Dual mixed formulation
(\ref{eq:mpd}), we have the unconditionally stable formulation in
${H_0(\div,\Omega)}\times H^1(\Omega)/\mathbb{R}$:

\noindent{ \bf Problem GLS(Hdiv):} {\it Find $\{\bm{u},p\} \in \tilde{\mathcal{U}}\times\mathcal{P}$ such that, for
all $\{\bm{v},q\}\in \tilde{\mathcal{U}}\times\mathcal{P}$}
\begin{eqnarray}
\nonumber (\lambda\bm{u},\bm{v})- (\Div \bm{v}, p) &-& (\Div \bm{u}, q)  -
\frac{1}{2}\left( K\left( \lambda \bm{u} + \nabla p\right), \lambda \bm{v}
+ \nabla q \right) \\ &+& \frac{1}{2}\left(\lambda\Div \bm{u},\Div \bm{v}
\right) =- (f,q) + \frac{1}{2}(\lambda f, \Div\bm{v}).
\end{eqnarray}
This formulation preserves the saddle point structure of the Dual
mixed formulation. The analysis follows the same techniques used for
Problem CGLS leading to the error estimates:
\begin{equation}
    \label{eq:hdiv-p}
     \|p - p_h\|_1 \leq  C
     h^{k}\left( \left|\bm{u} \right|_{k+1}
     + \left|p \right|_{k+1} \right),
\end{equation}
\begin{equation}
    \label{eq:hdiv-u}
     \|\bm{u} - \bm{u}_h \| \leq  C  h^k\left( \left|\bm{u} \right|_{k+1}
     + \left|p \right|_{k+1} \right),
\end{equation}
\begin{equation}
    \label{eq:hdiv-div}
     \|\Div\bm{u} - \Div\bm{u}_h \| \leq  C  h^k\left( \left|\bm{u} \right|_{k+1}
     + \left|p \right|_{k+1} \right),
\end{equation}
whose rates of convergence are optimal for potential in
$H^1(\Omega)$ norm and for the divergence of the velocity in
$L^2(\Omega)$ norm but suboptimal for the velocity in
$[L^2(\Omega)]^n$ norm.
This method should not be seen as just a particular case of the
method CGLS. It is another method in the sense that its ${H(\div,\Omega)}$
stability allows the use of divergent based finite element spaces
for velocity, as those developed in \cite{RAVIART77}. This
possibility, although not explored in the present work, is of great
interest in the study of heterogeneous porous media with interfaces
of discontinuity of the hydraulic resistivity.

A similar method was proposed in the eighties \cite{LOULA88} for a
heat transfer problem with the same mathematical structure of Darcy
problem. Differently of GLS(Hdiv), in \cite{LOULA88} the authors
consider both least squares residuals multiplied by positive
constants, resulting in the following method: {\it Find $\{\bm{u}_h,p_h\} \in
\mathcal{U}_h^{l}\times\mathcal{P}_h^{k}$ such that for all $\{\bm{v}_h,q_h\} \in \mathcal{U}_h^{l} \times \mathcal{P}_h^{k}$
\begin{eqnarray}
\nonumber (\lambda\bm{u},\bm{v})- (\Div \bm{v}, p) &-& (\Div \bm{u}, q)  +
\delta_1\left( K\left( \lambda \bm{u} + \nabla p\right), \lambda \bm{v}
+ \nabla q \right) \\ &+& \delta_2\left(\lambda\Div \bm{u},\Div \bm{v}
\right) =- (f,q) + \delta_2(\lambda f, \Div\bm{v}).
\end{eqnarray}
}

For appropriate choices of $\delta_1>0$ and  $\delta_2>0$, this
method is equivalent to a minimization problem, which assures
existence, uniqueness and the best approximation property in the
associated energy norm. The numerical analysis of this method, that
we call MGLS ({\em Minimum}/Galerkin Least-Squares), leads the same
error estimates of GLS(Hdiv).

%
%
\subsection{Symmetric Stabilization in $\left[ L^2(\Omega)\right]^n
\times H^1(\Omega)$}
%
%

A non symmetric stabilization  for Darcy flow is proposed in
\cite{MASUD2002} through the addition of an adjoint residual of the
Darcy's law to the non symmetric form of the Dual mixed formulation,
yielding the following method: {\it Find $\{\bm{u}_h,p_h\} \in \mathcal{U}_h^{l}\times\mathcal{P}_h^{k}$
such that for all $\{\bm{v}_h,q_h\} \in \mathcal{U}_h^{l} \times \mathcal{P}_h^{k}$
 \begin{equation}
 (\lambda\bm{u},\bm{v})- (\Div \bm{v}, p) + (\Div \bm{u}, q) \label{eq:hvm}
+ \frac{1}{2}\left( K\left( \lambda \bm{u} + \nabla p\right), - \lambda
\bm{v} + \nabla q \right)=(f,q).
  \end{equation}
}

In \cite{NAKSHATRALA2006} it is shown that this formulation can be
derived from a variational multiscale approach and it is referred as
Hughes variational multiscale (HVM) formulation for Darcy flow.
For sufficiently regular exact solutions and same order
interpolations ($l=k$), we derive the error estimates:
\begin{equation}
    \label{eq:hvm-p}
     \| \nabla p - \nabla p_h  \|\leq  C
     h^{k}\left( \left|\bm{u} \right|_{k+1}
     + \left|p \right|_{k+1} \right),
\end{equation}
\begin{equation}
    \label{eq:hvm-u}
     \|\bm{u} - \bm{u}_h \| \leq  C  h^k\left( \left|\bm{u} \right|_{k+1}
     + \left|p \right|_{k+1} \right),
\end{equation}
with  optimal rates for potential in  $H^1(\Omega)$ seminorm
(equivalent to $H^1(\Omega)$ norm) but suboptimal for the velocity
in $[L^2(\Omega)]^n$ norm.

In \cite{MASUD2002} the authors say that stabilizations such as
Galerkin/Least Squares are not as effective for the current problem
as the adjoint HVM formulation. The only essential difference is the
sign on the $\bm{v}$ term which they considered crucial. In fact due to
the non symmetry one can easily prove stability of HVM in
$[L^2(\Omega)]^n$ norm for the velocity and $H^1(\Omega)$ seminorm
for the potential, in the sense
of Lax.
However, taking $q=-q$ in (\ref{eq:hvm}) and integrating by parts,
we can present an equivalent symmetric formulation, posed in
$\bar{\mathcal{U}}=\left[ L^2(\Omega) \right]^n$ and $\mathcal{P}=H^1(\Omega)/ \mathbb{R}$:

\noindent{ \bf Problem SHVM:} {\it Find $\{\bm{u},p\} \in \bar{\mathcal{U}}\times\mathcal{P}$ such that for
all $\{\bm{v},q\} \in \bar{\mathcal{U}}\times\mathcal{P}$
  \begin{equation}
    (\lambda \bm{u},\bm{v}) + (\bm{v},\nabla p) + (\bm{u},\nabla q)
    -\frac{1}{2}\left( K\left( \lambda \bm{u} + \nabla p\right),
    \lambda \bm{v} + \nabla q\right)= -(f,q).
    \label{eq:primalcs}
  \end{equation}
}
This symmetric formulation is in fact a combination of the least
square residual of Darcy's law with the Primal Mixed Formulation,
described in Section \ref{sec:pmf}.  Although this formulation is
not stable in the sense of Lax, like the HVM, it is unconditionally
stable in the sense of Bab\v uska. The analysis follows from that
described for Problem CGLS.
Thus, taking only the least square residual of Darcy's law in our
CGLS formulation (\ref{eq:g-bilinear}), we have a symmetric and
unconditionally stable method, equivalent to the adjoint HVM
formulation.

Numerical results  presented in \cite{LOULA2006A} show that both
methods have the same accuracy. In fact, as we show in the Section
\ref{sec:adit}, the HVM formulation is identical to this symmetric
method (SHVM). Again, we emphasize that this method is not a
particular case of the CGLS or GLS(Hdiv) formulations, since
divergent based as well as continuous or discontinuous Lagrangian
finite element spaces can be adopted for the velocity field,
associated with continuous Lagrangian interpolations for the
potential.

\begin{remark}
The {\rm GLS(Hdiv)} method presents optimal estimate for velocity in
the ${H(\div,\Omega)}$ norm, but not necessarily in $[L^2(\Omega)]^2$ norm.
However, optimal estimates for potential can be proved for this
method, as well as for the {\rm SHVM} method.
\end{remark}

%
%
\subsection{A Stokes Compatible Formulation}
%
%

Excluding the least squares residual of Darcy's law in the {\rm
CGLSh } formulation presented above, we derive the Stokes compatible
finite element stabilization for Darcy's problem with velocity in
$\left[H^1(\Omega)\right]^n$ and pressure in $L^2(\Omega)$: {\it
Find $\{\bm{u}_h,p_h\} \in \mathcal{U}_h^{l} \times \mathcal{P}_h^{k}$ such that
\begin{eqnarray}
 a(\bm{u}_h,\bm{v}_h) + b(\bm{v}_h,p_h) &=& g(\bm{v}_h) \quad \forall \; \bm{v}_h \in
 \mathcal{U}_h^{l} \label{eq:scomp1}\\
 b(\bm{u}_h,q_h) &=& f(q_h) \quad \forall \; q_h \in \mathcal{P}_h^{k} \label{eq:scomp2}
\end{eqnarray}
} with
\[
 a(\bm{u},\bm{v}) = (\lambda\bm{u},\bm{v})
 + \frac{1}{2}\left(\lambda\Div \bm{u},\Div \bm{v} \right)
 +\frac{1}{2}\left(K\curl(\lambda \bm{u}),\curl (\lambda \bm{v}) \right)
\]
\[
 b(\bm{v},q) =  - (\Div \bm{v}, q)
\]
\[
 g(\bm{v}) = \frac{1}{2}\left(f,\Div \bm{v} \right)\, ;  \ \ f(q) = (f,q)
 \, .
\]
Clearly, this formulation is not unconditionally stable. Given the
equivalence of norms stated in Lemma \ref{lem:eq}, the similarity
between this formulation and the Galerkin formulation of Stokes
problem becomes evident. Thus, all  Galerkin stable finite element
spaces for Stokes are also stable for this formulation. The choice
$l=k+1$ with continuous velocity and pressure,  the well known
Taylor-Hood elements \cite{VERFURTH84}, leads to optimal rates of
convergence for velocity in $\left[H^1(\Omega)\right]^n$ and
pressure in $L^2(\Omega)$. This type of compatible formulation may
play an important role in the approximation of Stokes-Darcy coupled
flows \cite{CORREA2006}. On finite element approximations of
Stokes-Darcy coupled problem see references
\cite{DISCACCIATI2002,LAYTON2003,RIVIERE2005B,ARBOGAST2007}.

%
%
\subsection{Additional Comments}
\label{sec:adit}
%
%

An interesting aspect of the former stabilization may be viewed by
rearranging (\ref{eq:primalcs}), leading to the variational
formulation: {\it Find $\{\bm{u},p\} \in \bar{\mathcal{U}}\times\mathcal{P}$ such that for all $\{\bm{v},q\}\in
\bar{\mathcal{U}}\times\mathcal{P}$}
  \begin{equation}
     (\lambda \bm{u},\bm{v})  + (\bm{v},\nabla p) + (\bm{u},\nabla q) -
     \left(K\nabla p,\nabla q \right)= - 2(q,f). \label{eq:h1param}
  \end{equation}
We should note that this formulation is obtained by subtracting the
Primal formulation {\rm (\ref{eq:pv})} from the Primal mixed
formulation (Problem PM). This equivalence is clearly observed
rewriting {\rm (\ref{eq:h1param})} as: {\it Find $\bm{u} \in \bar{\mathcal{U}}$ and
$p \in \mathcal{P}$ such that}%
\begin{eqnarray}
\label{eq:equil1}
     (\lambda \bm{u},\bm{v})  &+& (\bm{v},\nabla p) =0  \;\; \quad \qquad \forall \; \bm{v} \in \bar{\mathcal{U}}
     \\
\label{eq:equil2}
      -\left[\left(K\nabla p,\nabla q \right)-  (f,q)\right] &+&(\bm{u}, \nabla q)=
      -(f,q) \quad \forall \; q \in \mathcal{P}.
\end{eqnarray}

The above equations show that an unconditionally stable mixed method
is derived  combining two classical Galerkin formulations.
We also observe that multiplying equation (\ref{eq:equil2}) by $-1$,
combining with equation (\ref{eq:equil1}) and integrating by parts,
we recover the HVM formulation. Thus, HVM and SHVM methods are
identical for $C^0(\Omega)$ Lagrangian elements, and both can be
derived from the same variational formulation.

Similarly, the CGLS formulation can be obtained by the addition of
the least squares residuals of the mass balance equation and the
curl of Darcy' s law to the equation (\ref{eq:equil1}), yielding the
following method: {\it Find $\bm{u} \in \mathcal{U}$ and $p \in \mathcal{P}$ such that
for all $\bm{v} \in \mathcal{U}$ and $q \in \mathcal{P}$ }
\begin{eqnarray*}
     (\lambda \bm{u},\bm{v}) + \left(\lambda\Div \bm{u},\Div \bm{v} \right) + \left(K\curl(\lambda \bm{u}),\curl
(\lambda \bm{v}) \right) + (\bm{v},\nabla p)  &=& (\lambda f, \Div\bm{v})
 \\
      -\left[\left(K\nabla p,\nabla q \right)-  (f,q)\right] +(\bm{u},\nabla q)
      &=&
      -(f,q).
\end{eqnarray*}

Concluding this Subsection, we recall a nonsymmetric formulation
presented in \cite{CORREA2007A}:  {\it Find $\{\bm{u},p\} \in \mathcal{U}\times\mathcal{P}$ such that,
for all $\{\bm{v},q\}\in \mathcal{U}\times\mathcal{P}$
\begin{eqnarray}
 \nonumber (\lambda\bm{u},\bm{v}) - (\Div \bm{v}, p) &+& (\Div \bm{u}, q) \\
  \nonumber &+& \delta_1\left( K\left( \lambda \bm{u} + \nabla p\right), \lambda
\bm{v} + \nabla q  \right) \\ \nonumber & +& \delta_2\left(\lambda\Div
\bm{u},\Div \bm{v} \right) +\delta_3\left(K\curl(\lambda \bm{u}),\curl
(\lambda \bm{v}) \right) \nonumber \\
  &=& (f,q) +
\delta_2(\lambda f, \Div\bm{v})
\end{eqnarray}
}
where $\delta_i$ are free positive parameters. This nonsymmetric
formulation is inspired on the stabilization for the Stokes problem
proposed in \cite{DOUGLAS89}. It leads to the same convergence rates
obtained for CGLS. These facts are discussed and numerically
illustrated in \cite{CORREA2007A}.

%
%
\section{Numerical Experiments} \label{sec:num}
%
%

In this Section we present two numerical studies to confirm the
predicted rates of convergence and to demonstrate the flexibility
and the robustness of the proposed methods.

%
%
\subsection{Convergence Study}
%
%

In this first numerical experiment we perform a  convergence study
of CGLS, HVM, GLS(Hdiv) and MGLS (with $\delta_1=\delta_2=1/2$)
methods, applied to homogeneous and non-homogeneous media with
regular solutions. In this study we solve Darcy problem in a
bidimensional square domain $[0,2]\times[0,2]$ with regular
conductivity
\begin{equation}
\label{eq:k} \kappa(x,y)=k_1(x-2)x(y-2)y+k_2
\end{equation}
with $k_1\geq0$ and $k_2>0$, and source
\begin{eqnarray*}
f(x,y)=&+ & [k_1(x-2)x(y-2)y+k_2]\sin \pi x \sin \pi y
\\ &-&\frac{k_1}{\pi}\left[ (x-1)(y-2)y\cos \pi x \sin \pi x
\right. \\ && \qquad +\left. (x-2)x(y-1)\sin \pi x \cos \pi y
\right],
\end{eqnarray*}
such that the exact solution is given by
\begin{equation}
  \label{eq:pe-s}
  p= \frac{1}{2\pi^2}\sin{\pi x}\sin {\pi y},
\end{equation}
\begin{equation}
  \label{eq:ue-s}
  \bm{u} = - \frac{[k_1(x-2)x(y-2)y+k_2]}{2\pi}
  \left[
    \begin{array}{c}
      \cos{\pi x}\sin {\pi y}  \\
      \sin{\pi x}\cos {\pi y}
    \end{array}
    \right].
\end{equation}
The flux is prescribed on the boundary using (\ref{eq:ue-s}).
The finite element solutions were computed adopting equal order
interpolation for velocity and pressure and uniform meshes with
$8\times8$, $16\times16$, $32\times32$ and $64\times64$ bilinear
elements and of $4\times4$, $8\times8$, $16\times16$ and
$32\times32$ biquadratic and bicubic elements for both fields.
Numerical integrations were performed with $3\times 3$, $4\times 4$
and $5\times5$ Gauss quadrature for Q$_{1}$Q$_{1}$, Q$_{2}$Q$_{2}$
and Q$_{3}$Q$_{3}$ elements, respectively.

%
%
\subsubsection{Homogeneous Medium}
%
%

In this study we consider  $k_1=0.0$ and $k_2=1.0$ in (\ref{eq:k}),
reproducing the example studied in
\cite{MASUD2002,LOULA2006A,CORREA2006}. To illustrate the solution
of  the test example in this case, we show in Figure \ref{fig:ph}
the potential and in Figure \ref{fig:vxh} the $u_x$ component of the
velocity field, approximated by CGLS with $32\times 32$ bilinear
elements (the  $u_y$ component is similar to $u_x$).

\begin{figure}[htb]
\centering
\subfloat[Potential $p$.]{\includegraphics[angle=0,width=.49\textwidth,angle=0]{./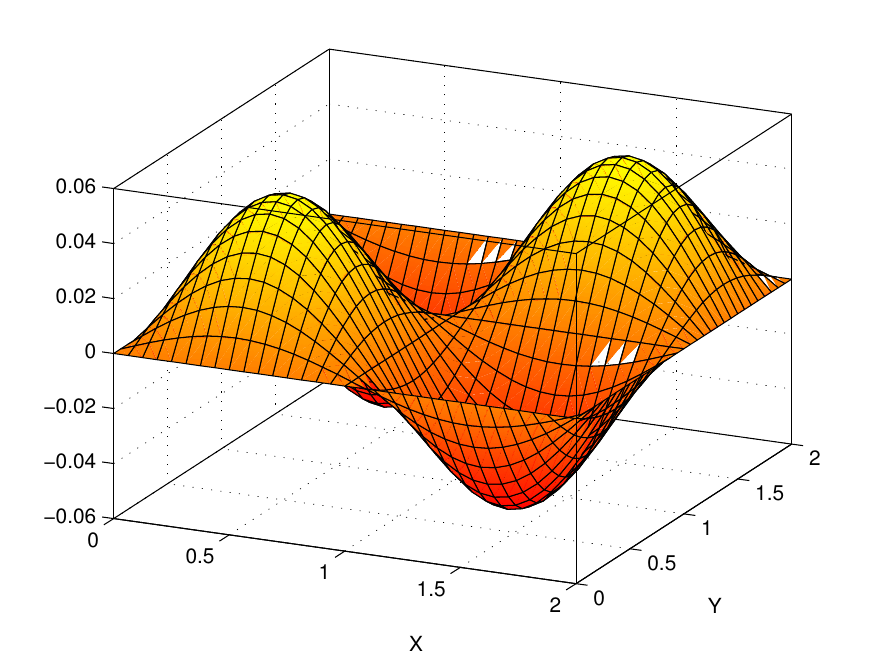}
\label{fig:ph}}
%
%
\subfloat[$u_x$-component of the velocity.]{\includegraphics[angle=0,width=.49\textwidth,angle=0]{./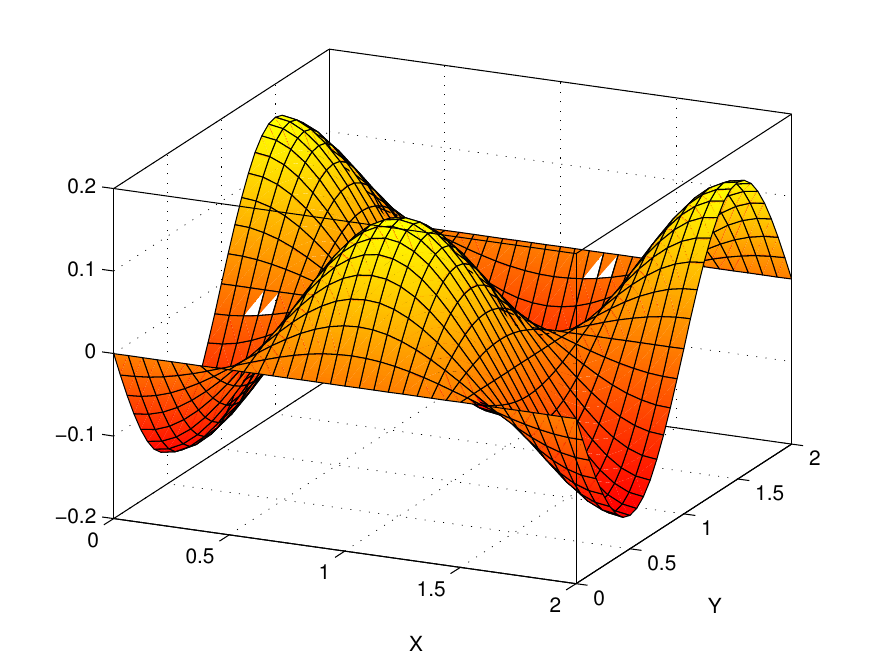}
\label{fig:vxh}}
\caption{Homogeneous Medium. Approximate Solution for $k_1=0.0$ and $k_2=1.0$}
\label{fig:graph1}
\end{figure}

In the convergence study, we present plots of the $\log$ of
$L^2(\Omega)$ norm of the errors of both velocity and potential,
their gradients and the divergence of the velocity {\em versus}
$-\log(h)$.
The results for velocity using Q$_{1}$Q$_{1}$ elements are shown in
Figure \ref{fig:vel-seno-q1}. Convergence rates close to
$O(h^{1.5})$ are obtained for GLS(Hdiv) and MGLS, while the optimal
convergence rate $O(h^{2.0})$ is achieved by CGLS and HVM.
According to numerical analysis, we may expect the optimal rate
$O(h^{2.0})$ only for CGLS. The convergence rates observed are
higher in one order than that predict for HVM and in half order for
GLS(Hdiv) and MGLS.
\begin{figure}[htb]
\centering
\subfloat[Velocity.]{\includegraphics[angle=0,width=.49\textwidth,angle=0]{./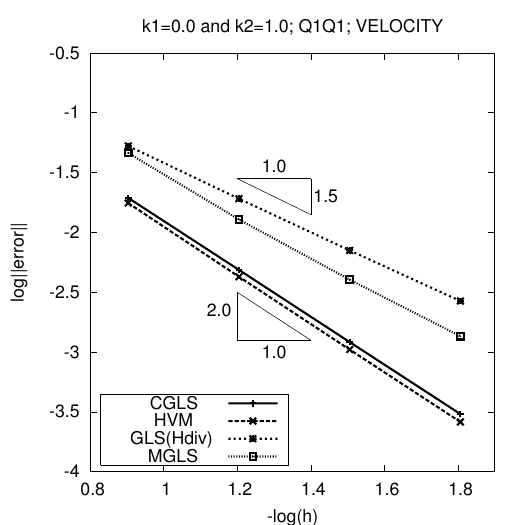}
\label{fig:vel-seno-q1}}
%
%
\subfloat[Gradient of velocity.]{\includegraphics[angle=0,width=.49\textwidth,angle=0]{./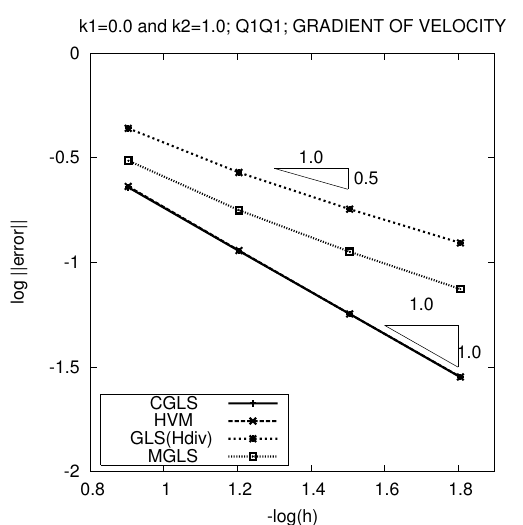}
\label{fig:grad-vel-seno-q1}}
\caption{Homogeneous Medium. 
 Convergence study: Q$_{1}$Q$_{1}$ elements.}
\label{fig:v-seno-q1}
\end{figure}
The rates for the gradient of the velocity, shown in the plots of
Figure \ref{fig:grad-vel-seno-q1}, also indicate convergence order
higher than predicted, once the rate  $O(h^{1.0})$ in
$[H^1(\Omega)]^2$ seminorm is reached for HVM, although expected
only for CGLS.

This ``super'' convergence is not observed for Q$_{2}$Q$_{2}$ and
Q$_{3}$Q$_{3}$ elements, as can be seen in Figures
\ref{fig:v-seno-q2} and \ref{fig:v-seno-q3}, respectively:
CGLS yields optimal  convergence rates for velocity in
$[H^1(\Omega)]^2$ seminorm and in $[L^2(\Omega)]^2$ norm, while the
expected quasi optimal rates are attained for the other methods.
\begin{figure}[htb]
\centering
\subfloat[Velocity.]{\includegraphics[angle=0,width=.49\textwidth,angle=0]{./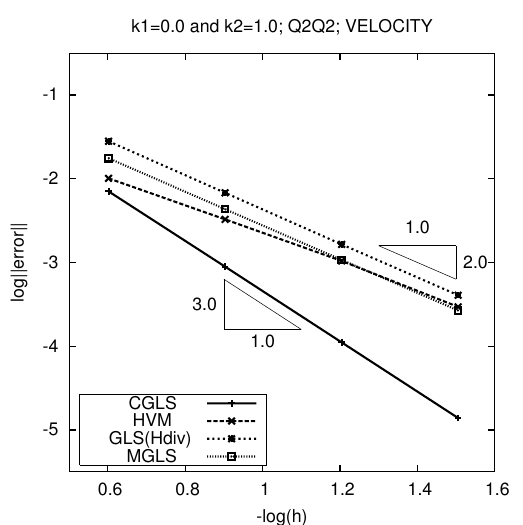}
\label{fig:vel-seno-q2}}
%
%
\subfloat[Gradient of velocity.]{\includegraphics[angle=0,width=.49\textwidth,angle=0]{./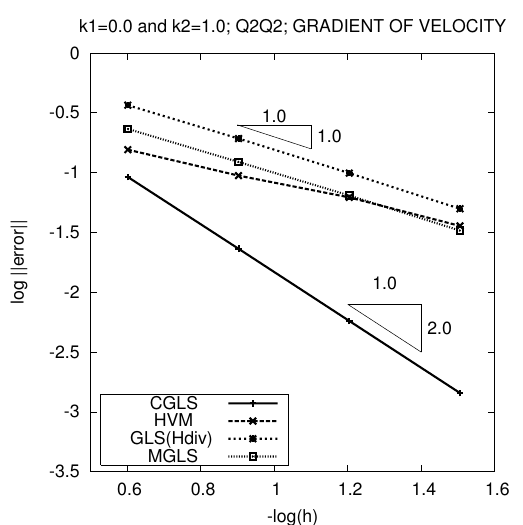}
\label{fig:grad-vel-seno-q2}}
\caption{Homogeneous Medium. Convergence study: Q$_{2}$Q$_{2}$ elements.}
\label{fig:v-seno-q2}
\end{figure}
\begin{figure}[htb]
\centering
\subfloat[Velocity.]{\includegraphics[angle=0,width=.49\textwidth,angle=0]{./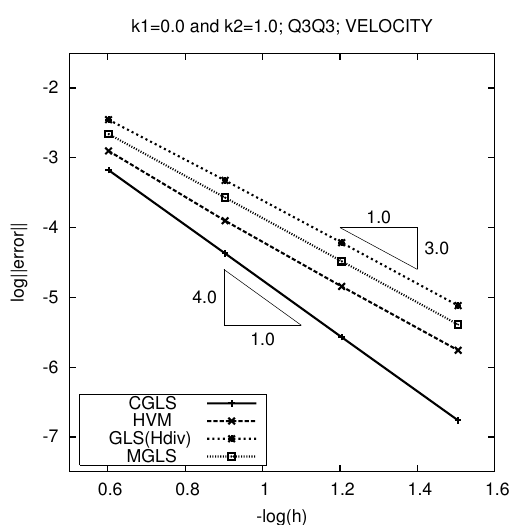}
\label{fig:vel-seno-q3}}
%
%
\subfloat[Gradient of velocity.]{\includegraphics[angle=0,width=.49\textwidth,angle=0]{./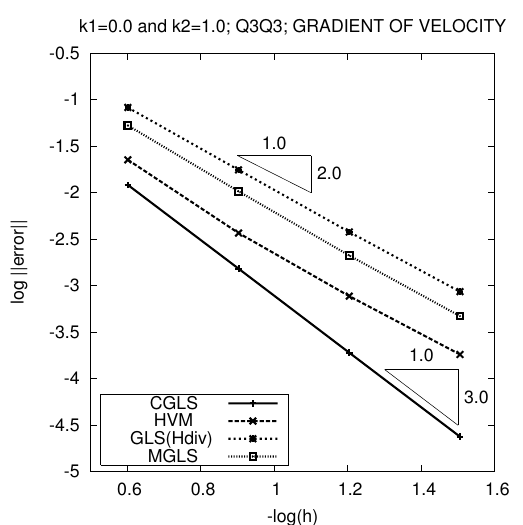}
\label{fig:grad-vel-seno-q3}}
\caption{Homogeneous Medium. Convergence study: Q$_{3}$Q$_{3}$ elements.}
\label{fig:v-seno-q3}
\end{figure}
The results for potential confirm the optimal convergence rates in
$H^1(\Omega)$ seminorm, leading additionally to optimal rates in
$L^2(\Omega)$ norm for all methods. These convergence plots are
shown in Figures \ref{fig:p-seno-q1}, \ref{fig:p-seno-q2} and
\ref{fig:p-seno-q3} for the Q$_1$Q$_1$, Q$_2$Q$_2$ and Q$_3$Q$_3$
elements, respectively.
\begin{figure}[htb]
\centering
\subfloat[Potential.]{\includegraphics[angle=0,width=.49\textwidth,angle=0]{./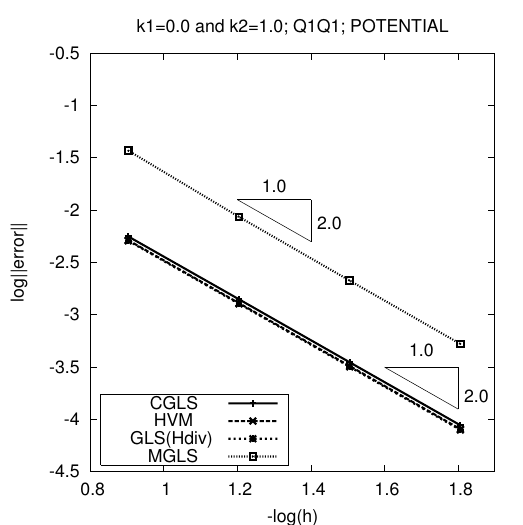}
\label{fig:pot-seno-q1}}
%
%
\subfloat[Gradient of potential.]{\includegraphics[angle=0,width=.49\textwidth,angle=0]{./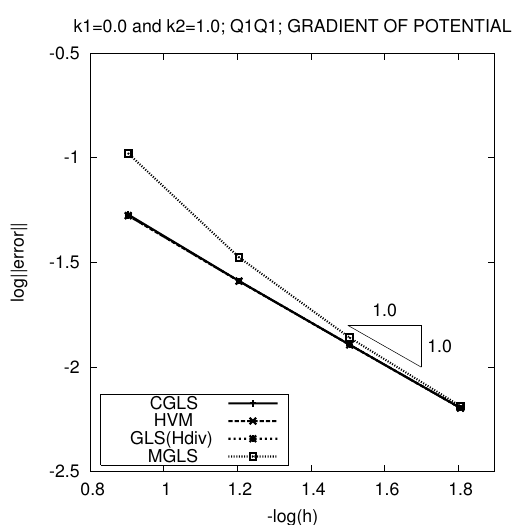}
\label{fig:grad-pot-seno-q1}}
\caption{Homogeneous Medium. Convergence study: Q$_{1}$Q$_{1}$ elements.}
\label{fig:p-seno-q1}
\end{figure}
\begin{figure}[htb]
\centering
\subfloat[Potential.]{\includegraphics[angle=0,width=.49\textwidth,angle=0]{./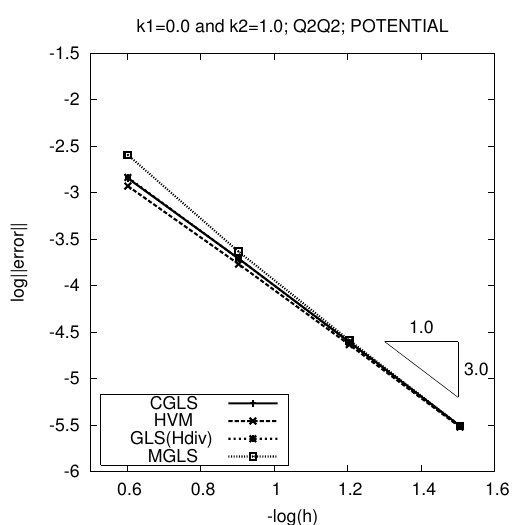}
\label{fig:pot-seno-q2}}
%
%
\subfloat[Gradient of potential.]{\includegraphics[angle=0,width=.49\textwidth,angle=0]{./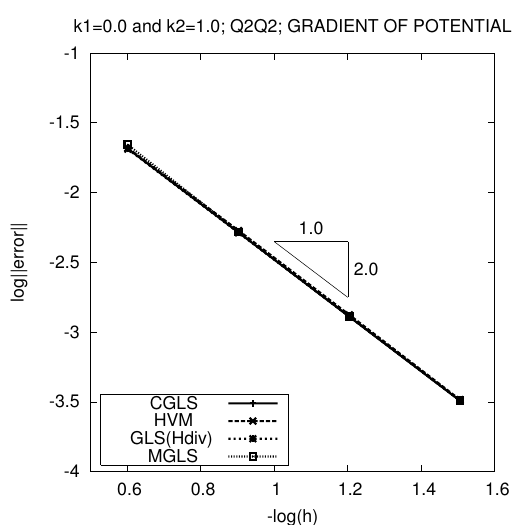}
\label{fig:grad-pot-seno-q2}}
\caption{Homogeneous Medium. Convergence study: Q$_{2}$Q$_{2}$ elements.}
\label{fig:p-seno-q2}
\end{figure}
\begin{figure}[htb]
\centering
\subfloat[Potential.]{\includegraphics[angle=0,width=.49\textwidth,angle=0]{./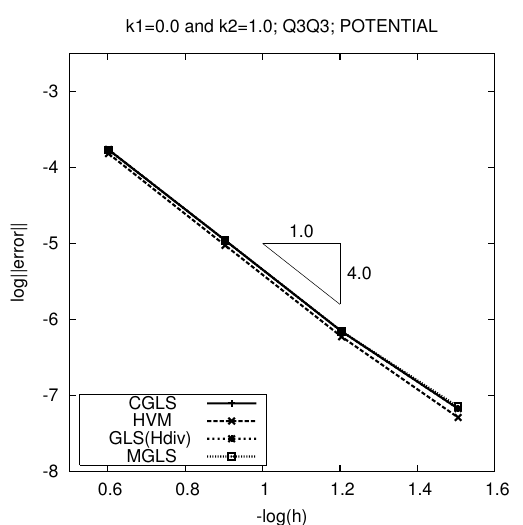}
\label{fig:pot-seno-q3}}
%
%
\subfloat[Gradient of potential.]{\includegraphics[angle=0,width=.49\textwidth,angle=0]{./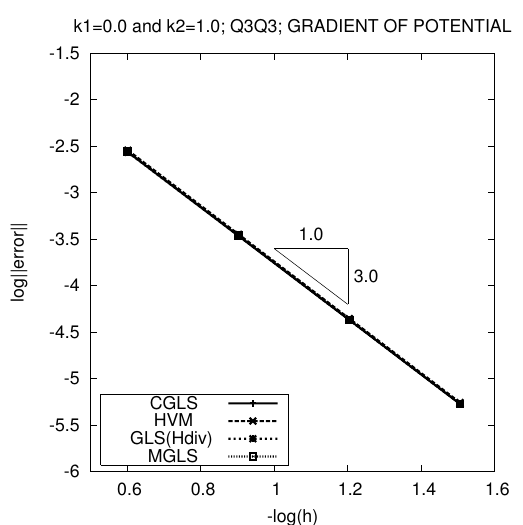}
\label{fig:grad-pot-seno-q3}}
\caption{Homogeneous Medium. Convergence study: Q$_{3}$Q$_{3}$ elements.}
\label{fig:p-seno-q3}
\end{figure}
In Figure {\rm\ref{fig:v-seno-q1}}, we observe that the addition of
least squares residual of the mass balance equation
--- used by  {\rm GLS(Hdiv)} and {\rm MGLS} --- apparently deteriorates the
convergence rates for velocity in $[L^2(\Omega)]^2$ norm for {\rm
Q$_{1}$Q$_{1}$} elements. This fact may give the impression that
methods that combine only residuals of Darcy's law (as {\rm HVM} or
its symmetric equivalent) are more accurate than GLS(Hdiv) and MGLS.
However, the numerical analysis shows that the  $[L^2(\Omega)]^2$
stability given by the combination of the adjoint or least squares
residual of Darcy's law is extended to  ${H(\div,\Omega)}$ by the addition of
the least square residual of mass balance improving stability
properties.
To illustrate this, we show in Figure {\rm \ref{fig:div-seno-th}}
the error plots of $\|\Div \bm{u} - \Div \bm{u}_h\|$ for {\rm Q$_2$Q$_2$}
and {\rm Q$_3$Q$_3$} elements. Optimal convergence rates are
achieved by  {\rm CGLS, GLS(Hdiv)} and {\rm MGLS}, while the
predicted suboptimal rates are observed for HVM.
Finally, the numerical experiments showed optimal convergence rates
for potential  in the $H^1(\Omega)$ seminorm or in the $L^2(\Omega)$
norm for all methods (not presented here).
\begin{figure}[htb]
\centering
\subfloat[Q$_{2}$Q$_{2}$ elements.]{\includegraphics[angle=0,width=.49\textwidth,angle=0]{./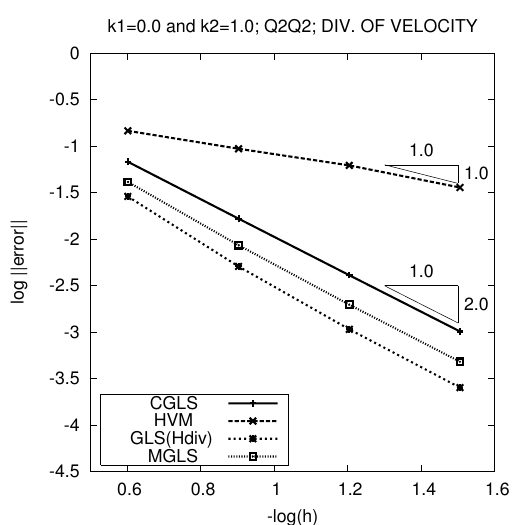}
\label{fig:div-vel-seno-q2}}
%
%
\subfloat[Q$_{3}$Q$_{3}$ elements.]{\includegraphics[angle=0,width=.49\textwidth,angle=0]{./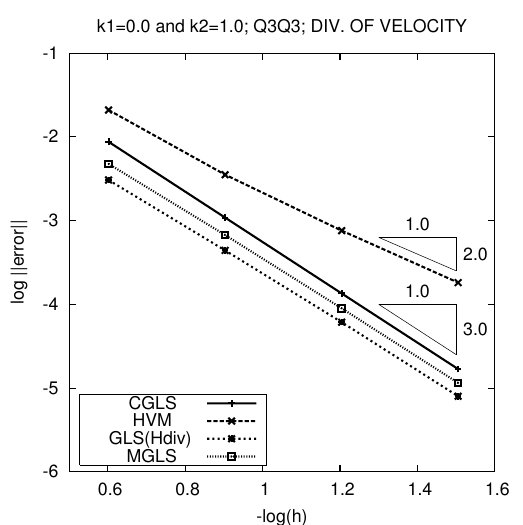}
\label{fig:div-vel-seno-q3}}
\caption{Homogeneous Medium. Convergence study: divergence of velocity. }
\label{fig:div-seno-th}
\end{figure}
%

%
%
\subsubsection{Non-Homogeneous Medium}
%
%

To investigate the stability of the CGLS method we check its
convergence properties when the conductivity is given by
(\ref{eq:k}) with $k_1=1.0$, $k_2=1.0$ and with $k_1=10.0$,
$k_2=1.0$.  The
 $u_x$ solutions approximated with a mesh of $32\times32$
 elements Q$_1$Q$_1$ are shown in Figure \ref{fig:graph2}.
\begin{figure}[htb]
\centering
\subfloat[$k_1=1.0$.]{\includegraphics[angle=0,width=.49\textwidth,angle=0]{./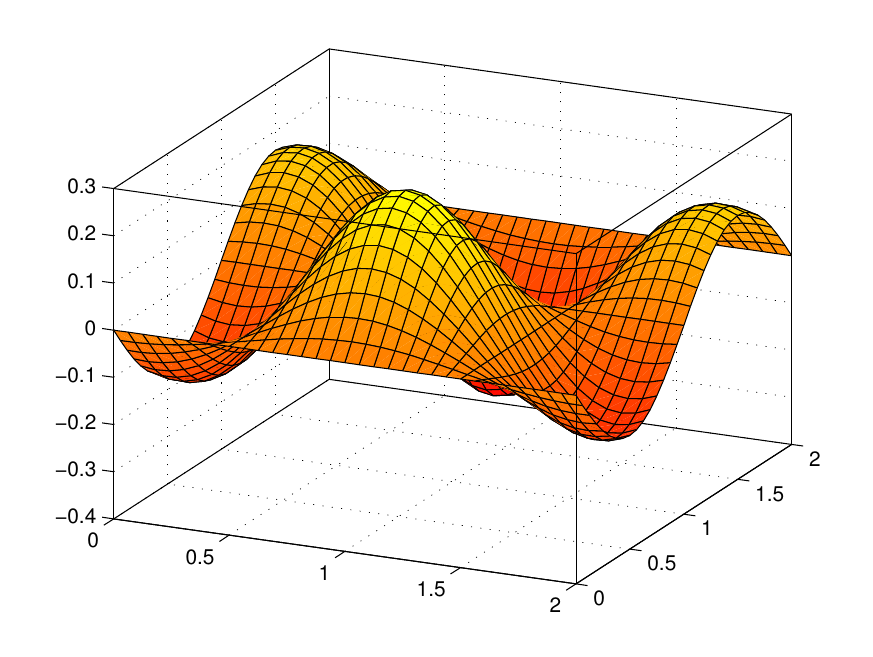}
\label{fig:vx1}}
%
%
\subfloat[$k_1=10.0$.]{\includegraphics[angle=0,width=.49\textwidth,angle=0]{./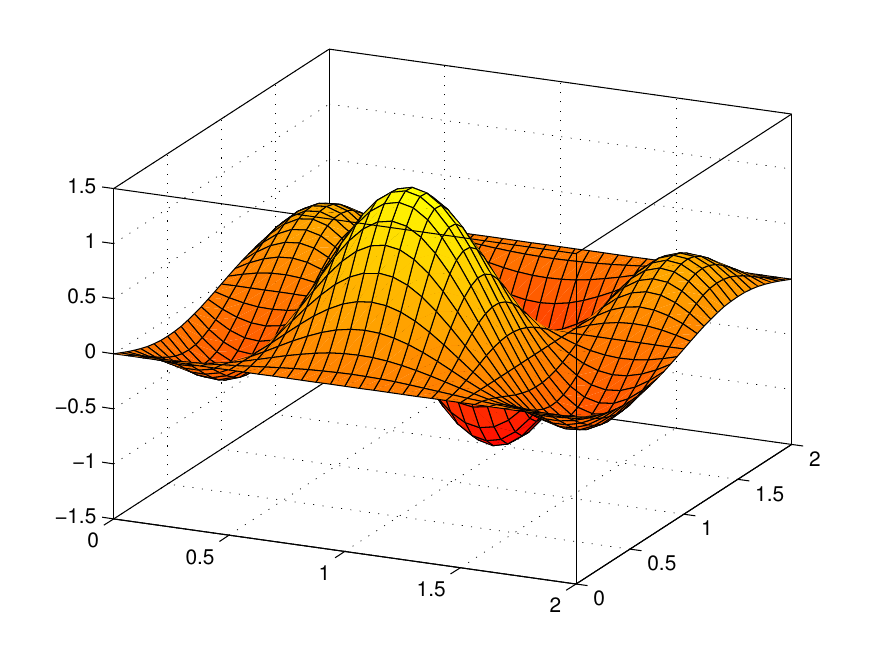}
\label{fig:vx10}}
\caption{Non-Homogeneous Medium. 
 Approximate $u_x$ component of the velocity for $k_2=1.0$.}
\label{fig:graph2}
\end{figure}
The results for the velocity are shown in Figures \ref{fig:gls-q1},
\ref{fig:gls-q2} and \ref{fig:gls-q3} for Q$_{1}$Q$_{1}$,
Q$_{2}$Q$_{2}$ and Q$_{3}$Q$_{3}$ elements, respectively. For the
sake of comparison the results of the first study, with constant
conductivity, are also plotted. As we can see, in all cases the
stability of the method CGLS remains unchanged and optimal
convergence rates, even in the $\left[L^2(\Omega)\right]$ norm, are
always achieved.
\begin{figure}[htb]
\centering
\subfloat[Velocity.]{\includegraphics[angle=0,width=.49\textwidth,angle=0]{./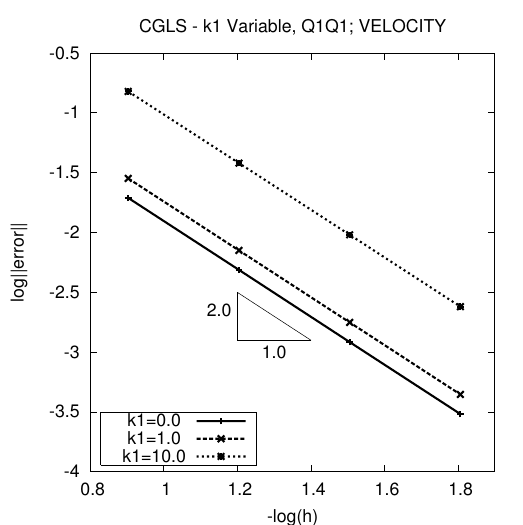}
\label{fig:gls-vel-q1}}
%
%
\subfloat[Gradient of velocity.]{\includegraphics[angle=0,width=.49\textwidth,angle=0]{./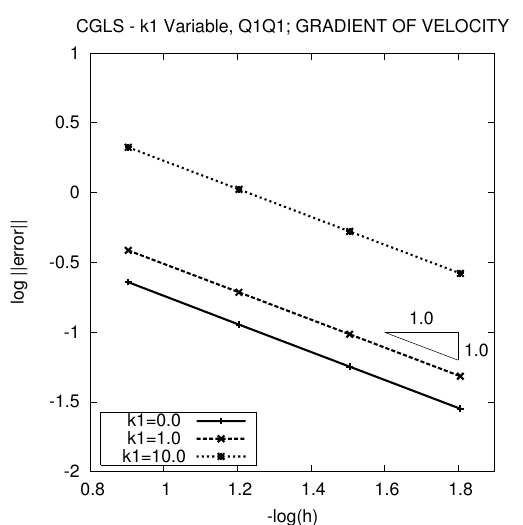}
\label{fig:gls-grad-vel-q1}}
\caption{Non-Homogeneous Medium. Convergence study for CGLS with variable conductivity
and  Q$_{1}$Q$_{1}$ elements.}
\label{fig:gls-q1}
\end{figure}
\begin{figure}[htb]
\centering
\subfloat[Velocity.]{\includegraphics[angle=0,width=.49\textwidth,angle=0]{./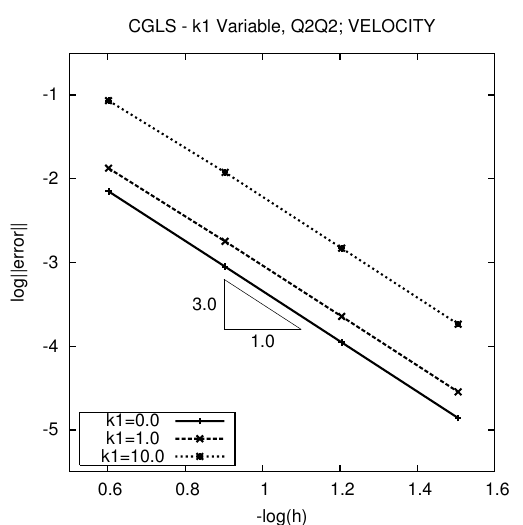}
\label{fig:gls-vel-q2}}
%
%
\subfloat[Gradient of velocity.]{\includegraphics[angle=0,width=.49\textwidth,angle=0]{./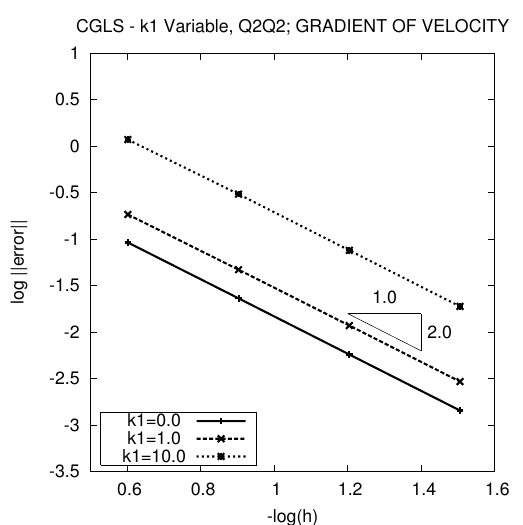}
\label{fig:gls-grad-vel-q2}}
\caption{Non-Homogeneous Medium. Convergence study for CGLS with variable conductivity
and  Q$_{2}$Q$_{2}$ elements.}
\label{fig:gls-q2}
\end{figure}
\begin{figure}[htb]
\centering
\subfloat[Velocity.]{\includegraphics[angle=0,width=.49\textwidth,angle=0]{./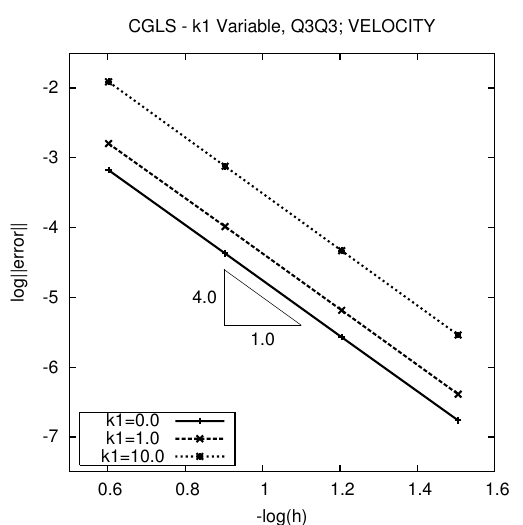}
\label{fig:gls-vel-q3}}
%
%
\subfloat[Gradient of velocity.]{\includegraphics[angle=0,width=.49\textwidth,angle=0]{./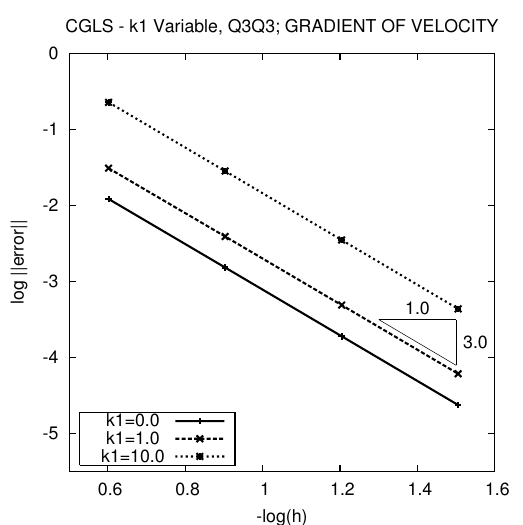}
\label{fig:gls-grad-vel-q3}}
\caption{Non-Homogeneous Medium. Convergence study for CGLS with variable conductivity
and  Q$_{3}$Q$_{3}$ elements.}
\label{fig:gls-q3}
\end{figure}
The errors for the potential and its gradient remains almost the
same.

\clearpage

%
%
\subsection{The Five-Spot Problem}
%
%

In this study, we examine the application of the proposed methods to
a problem with non regular solution corresponding to a quarter of
the five-spot problem. The mathematical model consists in solving
Darcy problem in a homogeneous medium ($k=1$) represented by a
square domain $\Omega=[0,L]\times[0,L]$ $(L=1.854075)$ with
concentrated loads: a sink at $x=y=0.0$ and a source at $x=y=L$,
with strength -1/4 and +1/4, respectively. Due the symmetry of the
problem, zero normal flow is prescribed on the boundary. The exact
solution for potential is given by  \cite{MOREL65}
\begin{equation}
  \label{eq:solfive}
  p(x,y) = \frac{1}{4\pi}\ln
  \left[ \frac{1-\mathrm{cn}^2(x)\mathrm{cn}^2(y)}
    {\mathrm{cn}^2(x)\mathrm{cn}^2(y)}\right],
\end{equation}
where $\mathrm{cn}$ denotes the elliptic cosine with modulus
$1/\sqrt{2}$.

To approximate this problem we first introduce a regularization.
Since we assumed $f\in L^2(\Omega)$, the concentrated loads are
approximated by the uniformly distributed loads over the corner
elements
\[
f_{\mathrm{sink}}=-\frac{0.25}{|\Omega^e_{\mathrm{sink}}|} \quad
\mbox{in} \quad \Omega^e_{\mathrm{sink}} \qquad \mbox{and} \qquad
f_{\mathrm{source}}=\frac{0.25}{|\Omega^e_{\mathrm{source}}|} \quad
\mbox{in} \quad \Omega^e_{\mathrm{source}}
\]
where  $|\Omega^e_{\mathrm{sink}}|$ and
$|\Omega^e_{\mathrm{source}}|$ represent the areas of the elements
containing the sink and the source, respectively.
First we approximated this problem using CGLS, GLS(Hdiv), MGLS and
HVM with a uniform mesh of $20\times20$  bilinear quadrilateral
elements.
The exact solution (\ref{eq:solfive}) and CGLS approximation for
potential are presented in Figures \ref{fig:fivesp-pote} and
\ref{fig:fivesp-cgls}, respectively. Similar results were obtained
with the other methods.
\begin{figure}[htb]
\centering
\subfloat[Exact.]{\includegraphics[angle=0,width=.49\textwidth,angle=0]{./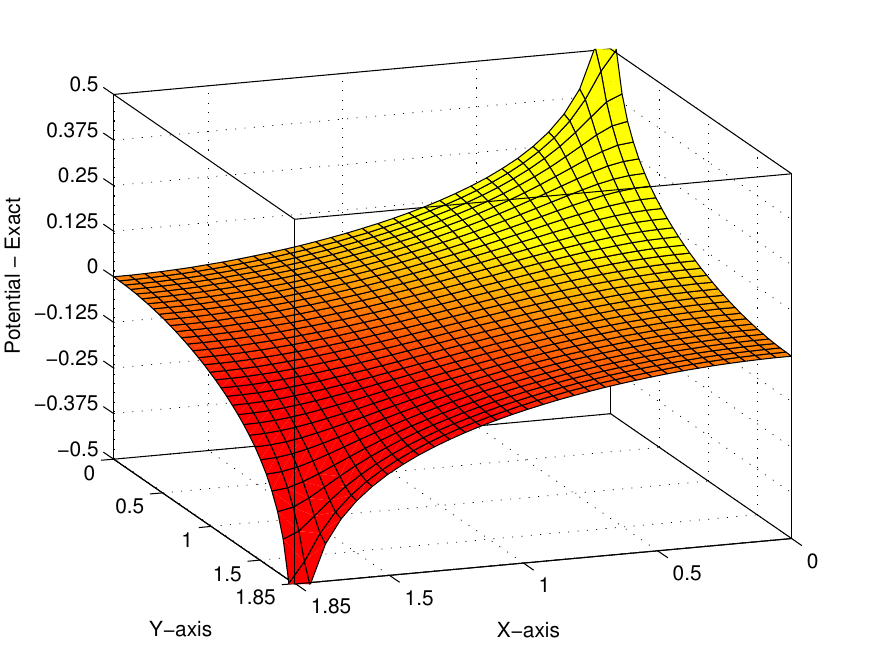}
\label{fig:fivesp-pote}}
%
%
\subfloat[CGLS.]{\includegraphics[angle=0,width=.49\textwidth,angle=0]{./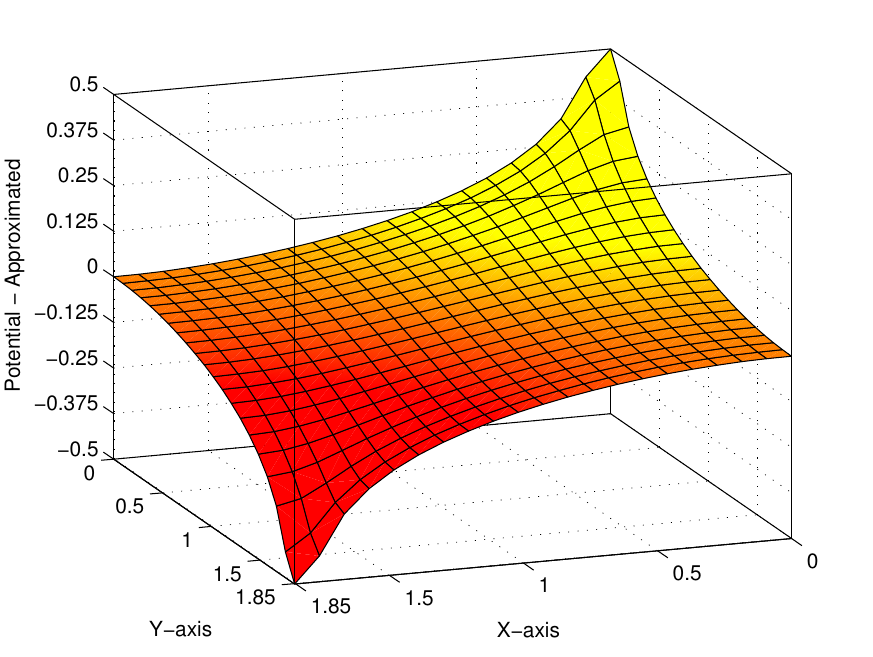}
    \label{fig:fivesp-cgls}}
\caption{Five-Spot Problem. Potential field: (a) Exact solution and (b) finite element approximation with
CLSG and a mesh of $20\times20$ bilinear elements.}
\label{fig:graph-fivep}
\end{figure}
In Figure \ref{fig:fivesp-vx-exata} we plot the exact solution for
the $u_x$ component of the velocity field. The approximations with
CGLS, GLS(Hdiv) and HVM for the $20\times20$ bilinear elements mesh
are shown in Figures \ref{fig:fivesp-vx-cgls},
\ref{fig:fivesp-vx-glshdiv} and \ref{fig:fivesp-vx-HVM},
respectively. We can see that they slightly differ in a region near
to the sources.
\begin{figure}[htb]
\centering
\subfloat[Exact.]{\includegraphics[angle=0,width=.49\textwidth,angle=0]{./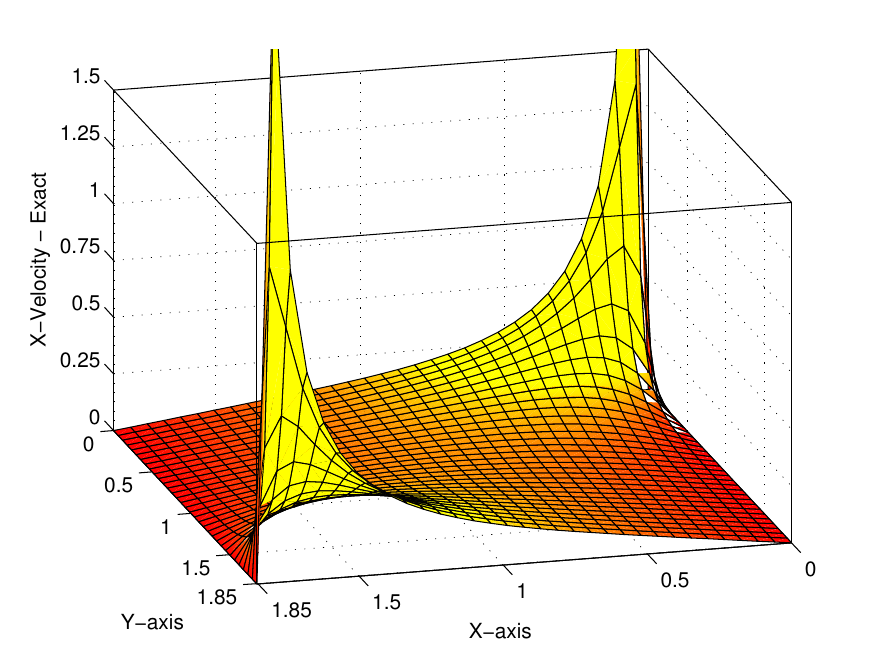}
\label{fig:fivesp-vx-exata}}
%
%
\subfloat[CGLS.]{\includegraphics[angle=0,width=.49\textwidth,angle=0]{./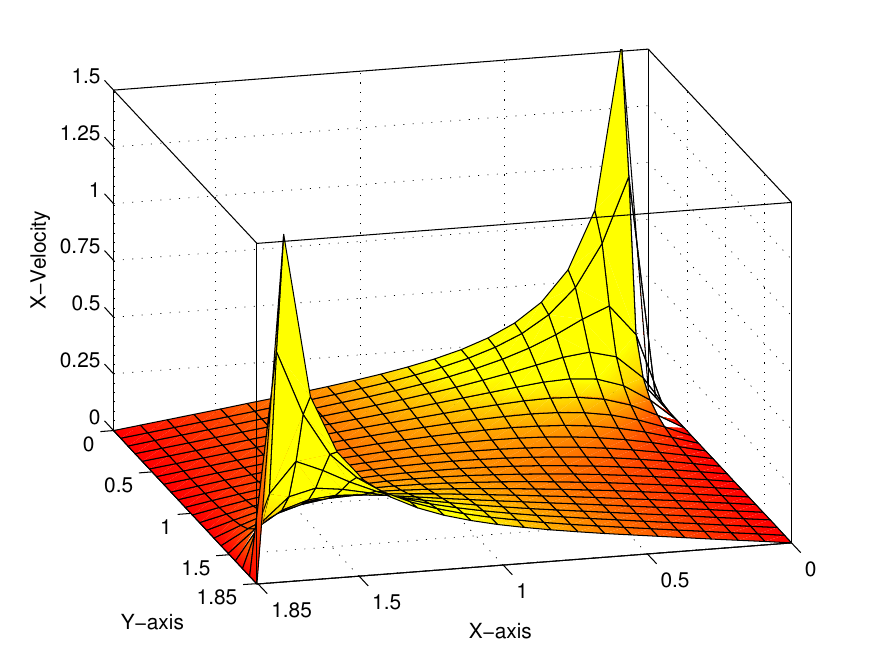}
\label{fig:fivesp-vx-cgls}}
\caption{$Five-Spot Problem. u_x$ Component of the velocity field: (a) Exact and (b)
CGLS approximation for a mesh of $20\times20$ bilinear elements.}
\label{fig:graph-fivev1}
\end{figure}
\begin{figure}[htb]
\centering
\subfloat[GLS(Hdiv).]{\includegraphics[angle=0,width=.49\textwidth,angle=0]{./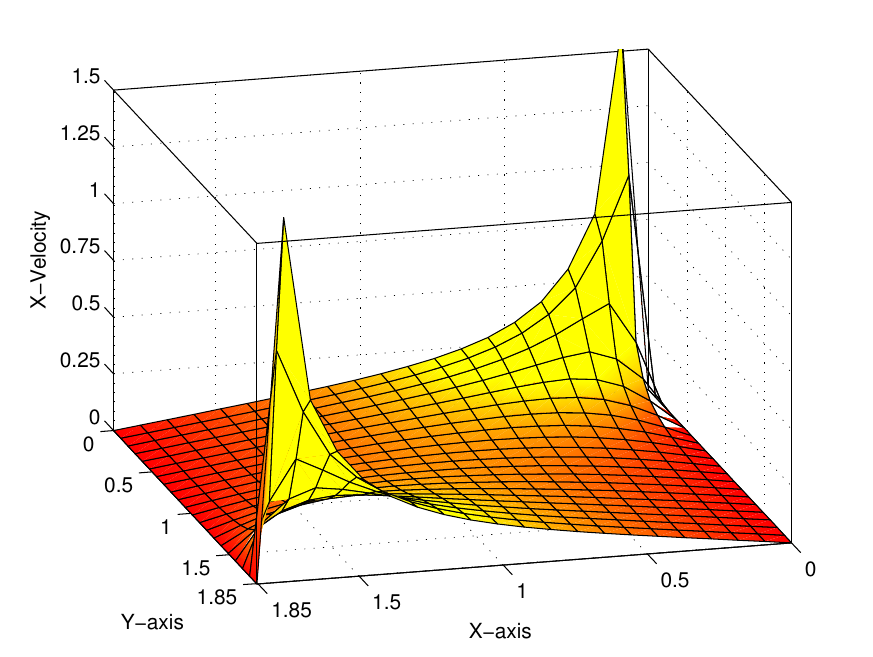}
\label{fig:fivesp-vx-glshdiv}}
%
%
\subfloat[HVM.]{\includegraphics[angle=0,width=.49\textwidth,angle=0]{./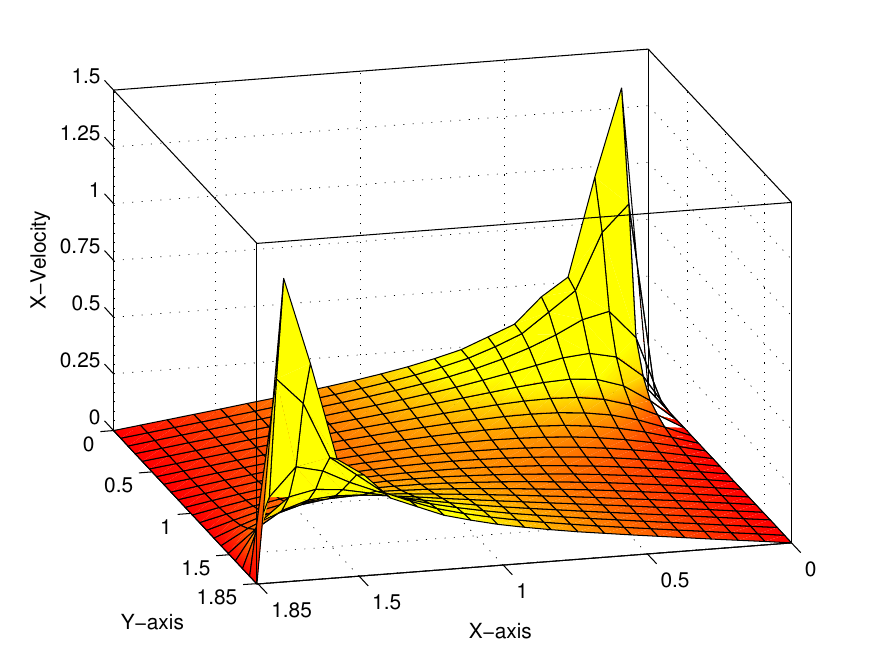}
\label{fig:fivesp-vx-HVM}}
\caption{Five-Spot Problem. $u_x$ Component of the velocity field: finite element approximation with (a) MGLS and (b) HVM for a mesh of $20\times20$ bilinear elements.}
\label{fig:graph-fivev2}
\end{figure}

We also perform a convergence study excluding the errors in two
small regions containing the singular loads consisting of a quarter
of a circle of radium $R=(0.1\sqrt{2})L$ centered at each point of
singularity. For the whole domain, not excluding the singularities
regions, we may expect that all methods achieve, at best, first
order rate of convergence for the pressure in $L^2(\Omega)$ norm,
and no convergence for the velocity approximation.
In this convergence study we adopt equal order interpolation for
velocity and pressure and uniform meshes with $16\times16$,
$32\times32$, $64\times64$  and $128\times128$ bilinear elements and
of $8\times8$, $16\times16$, $32\times32$ and $64\times64$
biquadratic elements.
The results for the $[L^2(\Omega)]^2$ norm of velocity using
Q$_1$Q$_1$ and Q$_2$Q$_2$ elements are shown in Figure
\ref{fig:ce-vel-five}. They show the robustness of the stabilized
methods preserving their accuracy away from small regions containing
the singularities.
Note the optimal convergence rates for velocity obtained by CGLS:
$O(h^{2.0})$ for bilinear and $O(h^{3.0})$ for biquadratic elements.
The results for potential, shown in Figure \ref{fig:ce-pot-five},
also confirm the predicted rates of convergence.
\begin{figure}[htb]
\centering
\subfloat[Q$_{1}$Q$_{1}$ Elements.]{\includegraphics[angle=0,width=.49\textwidth,angle=0]{./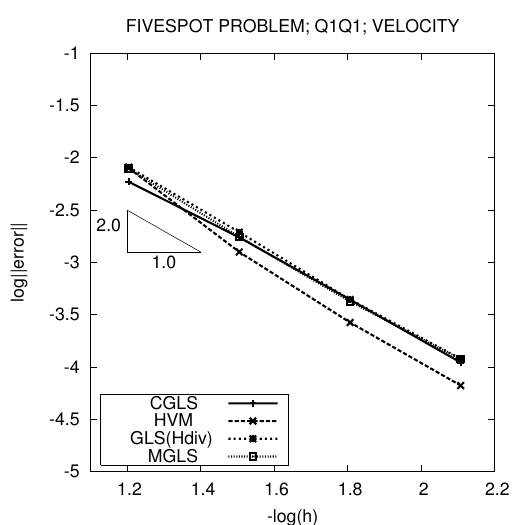}
\label{fig:ce-vel-five-q1}}
%
%
\subfloat[Q$_{2}$Q$_{2}$ Elements.]{\includegraphics[angle=0,width=.49\textwidth,angle=0]{./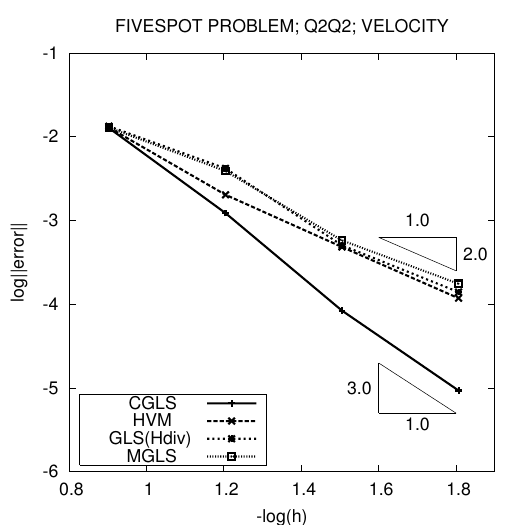}
\label{fig:ce-vel-five-q2}}
\caption{Five-Spot Problem. Convergence study for the regularized problem: Velocity.}
\label{fig:ce-vel-five}
\end{figure}
\begin{figure}[htb]
\centering
\subfloat[Q$_{1}$Q$_{1}$ Elements.]{\includegraphics[angle=0,width=.49\textwidth,angle=0]{./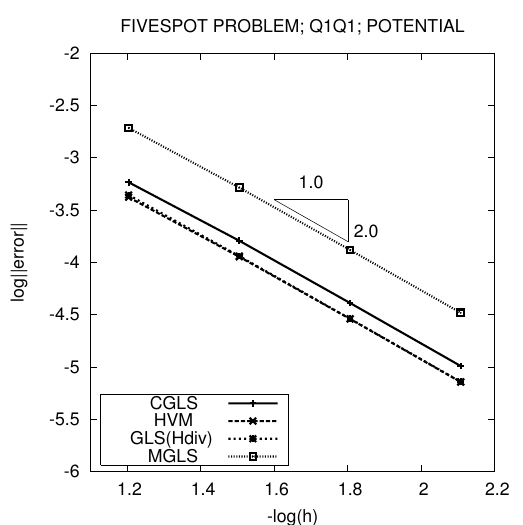}
\label{fig:ce-pot-five-q1}}
%
%
\subfloat[Q$_{2}$Q$_{2}$ Elements.]{\includegraphics[angle=0,width=.49\textwidth,angle=0]{./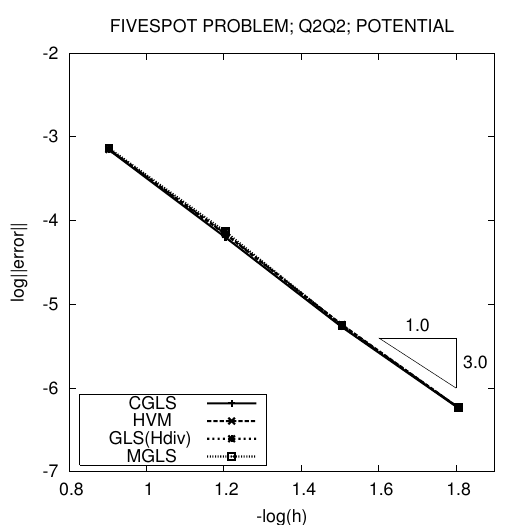}
\label{fig:ce-pot-five-q2}}
\caption{Five-Spot Problem. Convergence study for the regularized problem: Potential.}
\label{fig:ce-pot-five}
\end{figure}
%

%
%
\section{Concluding Remarks} \label{sec:conclu}
%
%

Unconditionally  stable mixed finite element methods for Darcy flow
were derived by combining least-squares residual forms of the the
governing equations with the classical mixed formulations as in the
mixed Petrov-Galerkin method or Galerkin Least Squares formulations.
Considering continuous Lagrangian interpolation of equal order for
velocity and potential fields, numerical analysis of the proposed
formulations lead the following conclusions:
\begin{itemize}

 \item Subtracting to the Primal mixed formulation only the least
squares residual of Darcy's law, a finite element method stable in
$\left[L^2(\Omega)\right]^n \times H^1(\Omega)$ is derived  with
optimal rates of convergency for potential in $H^1(\Omega)$ and
quasi optimal rates for velocity in $\left[L^2(\Omega)\right]^n$
norms. This method is shown to be equivalent to the HVM method,
presented in \cite{MASUD2002}, and also to a stabilized method
obtained by combining two classical Galerkin formulations: Dual
mixed in velocity and potential and the Primal formulation, in
pressure only.

\item
Combining least square residuals of both conservation of mass and
Darcy's law with the Dual mixed formulation, a finite element method
stable in ${H(\div,\Omega)} \\\times H^1(\Omega)$ is obtained with optimal rates
of convergence for velocity in ${H(\div,\Omega)}$ and potential in $H^1(\Omega)$
norms.

\item Including a least square residual of the curl of Darcy's
law  an $\left[H^1(\Omega)\right]^n\times H^1(\Omega)$ stable method
is derived, with optimal rates of convergency for both velocity and
potential in the norm of the product space. Under regularity
assumptions, an optimal estimate is proved for this method in the
$\left[L^2(\Omega)\right]^2\times L^2(\Omega)$ norm.

\end{itemize}

Other possible interpolations for the velocity in the GLS(Hdiv) and
HVMS methods and a Stokes compatible formulation for Darcy flow are
also commented.
The  numerical experiments  performed, illustrate the flexibility
and robustness of the proposed finite element formulations and
confirm the predicted rates of convergence.

\section*{Acknowledgements}

The authors thank to  Funda\c c\~ao Carlos Chagas Filho de Amparo \`a
Pesquisa do Estado do Rio de Janeiro (FAPERJ) and tothe National Council for Scientific and Technological Development (CNPq)
for the sponsoring.

\bibliographystyle{elsart-num}
\bibliography{tese}

\begin{thebibliography}{10}
\expandafter\ifx\csname url\endcsname\relax
  \def\url#1{\texttt{#1}}\fi
\expandafter\ifx\csname urlprefix\endcsname\relax\def\urlprefix{URL }\fi

\bibitem{RAVIART77}
P.~A. Raviart, J.~M. Thomas, A mixed finite element method for second order
  elliptic problems, in: Math. Aspects of the F.E.M., No. 606 in Lectute Notes
  in Mathematics, Springer-Verlag, 1977, pp. 292--315.

\bibitem{FORTIN91}
F.~Brezzi, M.~Fortin, Mixed and Hybrid Finite Element Methods, Vol.~15 of
  Springer Series in Computational Mathematics, Springer-Verlag, New York,
  1991.

\bibitem{LOULA95}
A.~F.~D. Loula, F.~A. Rochinha, M.~A. Murad, Higher-order gradient
  post-processings for second-order elliptic problems, Computer Methods in
  Applied Mechanics and Engineering 128 (1995) 361--381.

\bibitem{CORREA2007}
M.~R. Correa, A.~F.~D. Loula, Stabilized velocity post-processings for {D}arcy
  flow in heterogenous porous media, Communications in Numerical Methods in
  Engineering 23 (2007) 461--489.

\bibitem{CORDES92}
C.~Cordes, W.~Kinzelbach, Continuous groundwater velocity fields and path lines
  in linear, bilinear and trilinear finite elements, Water Resources Research
  28~(11) (1992) 2903--2911.

\bibitem{DURLOFSKY94}
L.~J. Durlofsky, Accuracy of mixed and control volume finite element
  approximations to {D}arcy velocity and related quantities, Water Resources
  Research 30~(4) (1994) 965--973.

\bibitem{BREZZI74}
F.~Brezzi, On the existence, uniqueness and approximation of saddle point
  problems arising from lagrange multipliers, RAIRO Analyse
  num\'erique/Numerical Analysis 8(R-2) (1974) 129--151.

\bibitem{LOULA87A}
A.~F.~D. Loula, T.~J.~R. Hughes, L.~P. Franca, Mixed {P}etrov-{G}alerkin
  methods for the {T}imoshenko beam problem, Computer Methods in Applied
  Mechanics and Engineering 63 (1987) 133--154.

\bibitem{LOULA88}
A.~F.~D. Loula, E.~M. Toledo, Dual and Primal Mixed Petrov-Galerkin Finite
  Element Methods in Heat Transfer Problems, LNCC - Technical Report 048/88,
  1988.

\bibitem{FRANCA88}
L.~P. Franca, T.~J.~R. Hughes, Two classes of mixed finite element methods,
  Computer Methods in Applied Mechanics and Engineering 69 (1988) 89--129.

\bibitem{MASUD2002}
A.~Masud, T.~J.~R. Hughes, A stabilized finite element method for {D}arcy flow,
  Computer Methods in Applied Mechanics and Engineering 191 (2002) 4341--4370.

\bibitem{BREZZI2005}
F.~Brezzi, T.~J.~R. Hughes, L.~D. Marini, A.~Masud, A mixed discontinuous
  {G}alerkin method for {D}arcy flow, SIAM J. Scientific Comput. 22-23 (2005)
  119--145.

\bibitem{HUGHES2006A}
T.~J.~R. Hughes, A.~Masud, J.~Wan, A stabilized mixed discontinuous {G}alerkin
  method for {D}arcy flow, Computer Methods in Applied Mechanics and
  Engineering 195 (2006) 3347--3381.

\bibitem{LOULA2006A}
A.~F.~D. Loula, M.~R. Correa, Numerical analysis of stabilized mixed finite
  element methods for {D}arcy flow, in: III European Conference on
  Computational Mechanics -- ECCM 2006, Lisbon, Portugal, 2006.

\bibitem{CORREA2006}
M.~R. Correa, Stabilized Finite Element Methods for {D}arcy and Coulped
  {S}tokes-{D}arcy Flows, D.Sc. Thesis, LNCC, Petr\'opolis, RJ, Brazil (in
  Portuguese), 2006.

\bibitem{BARRENECHEA2007}
G.~Barrenechea, L.~P. Franca, F.~Valentin, A {P}etrov-{G}alerkin enriched
  method: A mass conservative finite element method for the {D}arcy equation,
  Computer Methods in Applied Mechanics and Engineering 196 (2007) 2449--2464.

\bibitem{CIARLET78}
P.~G. Ciarlet, The Finite Element Method for Elliptic Problems, Studies in
  Mathematics and its Applications, North-Holland Publishing Company, 1978.

\bibitem{ZIENK92}
O.~C. Zienkiewicz, J.~Z. Zhu, The superconvergent patch recovery and a
  posteriori error estimates. part 1: The recovery technique., International
  Journal for Numerical Methods in Engineering 33 (1992) 1331--1364.

\bibitem{ODEN4}
G.~F. Carey, J.~T. Oden, Finite Elements: Mathematical Aspects, Volume 4,
  Prentice Hall, Inc., Eaglewood Cliffs, New Jersey, 1983.

\bibitem{GIRAULT86}
V.~Girault, P.~Raviart, Finite Element Methods for Navier-Stokes Equations:
  Theory and Algorithms., Springer Series in Computational Mathematics,
  Springer-Verlag, 1986.

\bibitem{BREZZI85}
F.~Brezzi, J.~{Douglas Jr.}, L.~D. Marini, Two families of mixed finite
  elements for second order elliptic problems, Numerische Mathematik 47 (1985)
  217--235.

\bibitem{CAI97}
Z.~Cai, T.~A. Manteuffel, S.~F. Mccormick, First-order system least squares for
  second-order partial differential equations: {P}art ii, SIAM Journal on
  Numerical Analysis 34~(2) (1997) 425--454.

\bibitem{BABUSKA71A}
I.~Bab{\v u}ska, Error bounds for finite element method, Numerische Mathematik
  16 (1971) 322--133.

\bibitem{GELFAND63}
I.~M. Gelfand, S.~V. Fomin, Calculus of Variations, Prentice-Hall, 1963.

\bibitem{NAKSHATRALA2006}
K.~B. Nakshatrala, D.~Z. Turner, K.~D. Hjelmstad, A.~Masud, A stabilized finite
  element method for {D}arcy flow based on a multiescale decomposition of the
  solution, Computer Methods in Applied Mechanics and Engineering 195 (2006)
  4036--4049.

\bibitem{VERFURTH84}
R.~Verf\"urth, Error estimates for a mixed finite element approximation of the
  {S}tokes equations, RAIRO Analyse num\'erique/Numerical Analysis 18~(2)
  (1984) 175--182.

\bibitem{DISCACCIATI2002}
M.~Discacciati, E.~Miglio, A.~Quarteroni, Mathematical and numerical models for
  coupling surface and growndwater flows, Applied Numerical Mathematics 43
  (2002) 57--74.

\bibitem{LAYTON2003}
W.~J. Layton, F.~Schieweck, I.~Yotov, Coupling fluid flow with porous media
  flow, SIAM Journal on Numerical Analysis 40~(6) (2003) 2195--2218.

\bibitem{RIVIERE2005B}
B.~Rivi\`ere, Analysis of a discontinuous finite element method for the coupled
  {S}tokes and {D}arcy problems, Journal of Scientific Computing 22-23 (2005)
  479--500.

\bibitem{ARBOGAST2007}
T.~Arbogast, D.~S. Brunson, A computational method for approxima\-ting a
  {D}arcy-{S}tokes system governing a vuggy porous medium, Computational
  Geosciences 11~(3) (2007) 207--218.

\bibitem{CORREA2007A}
M.~R. Correa, A.~Loula, An adjoint stabilized mixed finite element method for
  porous media flow (in {P}ortuguese), in: XXVIII CILAMCE -- Iberian
  Latin-American Congress on Computational Methods in Engineering, Porto,
  Portugal, 2007.

\bibitem{DOUGLAS89}
J.~{Douglas Jr}., J.~Wang, An absolutely stabilized finite element method for
  the {S}tokes problem, Mathematics of Computation 52~(186) (1989) 495--508.

\bibitem{MOREL65}
H.~J. Morel-Seytoux, Analitical numerical method in waterflooding predictions,
  SPE J. 5 (1965) 247--285.

\end{thebibliography}







\end{document}